
\documentclass[twoside,10pt]{amsart}
\usepackage{graphicx,amsmath,amsthm,amssymb,latexsym,amsfonts,color}
\usepackage[bookmarksnumbered, colorlinks, plainpages]{hyperref}
\usepackage{algorithm2e}
\usepackage{booktabs}
\usepackage{multirow}

\footskip=27pt

\setlength{\textwidth}{15.5cm}
\setlength{\textheight}{22cm}
\setlength{\topmargin}{-0.1cm}
\setlength\oddsidemargin{0.75cm}
\setlength\evensidemargin{0.0cm}

\setcounter{page}{1}

\newtheorem{theorem}{Theorem}[section]

\newtheorem{remark}[theorem]{\bf{Remark}}

\setcounter{page}{1}
\usepackage{cleveref}
\begin{document}
\date{}

\title{Solving linear systems of the form $(A + \gamma UU^T)\, {\bf x} = {\bf b}$ \\ by preconditioned iterative methods}
\thanks{{\scriptsize
		$^\ast$Corresponding author}}
\maketitle
\begin{center}
	
	 \textbf{\textbf{Michele Benzi}$^{\ast, \S}$} \textbf{and \textbf{Chiara Faccio}$^{\dagger}$}  \\ [0.2cm]
	$^\S${\small \textit{Scuola Normale Superiore, Piazza dei Cavalieri, 7, 56126, Pisa, Italy}}\\
	\texttt{e-mail: michele.benzi@sns.it}\\[0.2cm]
	$^{\dagger}${\small \textit{Scuola Normale Superiore, Piazza dei Cavalieri, 7, 56126, Pisa, Italy}}\\
	\texttt{e-mail: chiara.faccio@sns.it}\\[0.2cm]
\end{center}

\begin{abstract} We consider the iterative solution of large linear systems of equations in which the coefficient matrix is the
sum of two terms, a sparse matrix $A$ and a possibly dense, rank deficient matrix of the form $\gamma UU^T$, where $\gamma > 0$
is a parameter which in some applications may be taken to be 1.  The matrix $A$ itself
can be singular, but we assume that the symmetric part of $A$ is positive semidefinite and that $A+\gamma UU^T$ is nonsingular. 
Linear systems of this form arise frequently in fields like optimization, fluid mechanics, computational statistics, 
and others.  We investigate preconditioning strategies based on an alternating splitting approach combined with the use 
of the Sherman-Morrison-Woodbury matrix identity.   The potential of the proposed approach is demonstrated by means of
numerical experiments on linear systems from different application areas.
	
	\bigskip
	\noindent \textit{Keywords}:augmented systems, saddle point problems, augmented Lagrangian method, Schur complement,  iterative methods, Krylov subspace methods, preconditioning techniques   \\
	
	\noindent \textit{2010 AMS Subject Classification}: 65F10.
\end{abstract}

\pagestyle{myheadings}
\markboth{\rightline {\scriptsize  Michele Benzi and Chiara Faccio}}
{\leftline{\scriptsize Preconditioning techniques for a class of linear systems
	}}
	
	\bigskip
	\bigskip

\section{Introduction}\label{sec1}

A problem that frequently arises in large-scale scientific computing is the solution of linear systems 
of the form 
\begin{equation}\label{eq1}
 (A + \gamma UU^T)\,  x =  b\,,
\end{equation}
where $A\in \mathbb{R}^{n\times n}$, $U\in \mathbb{R}^{n\times k}$, $\gamma > 0$ and ${\bf b}\in \mathbb{R}^n$ are given.
We make the following assumptions:

\begin{itemize}
\item The matrix $A$ is positive semidefinite, in the sense that the symmetric matrix $A+A^T$ is positive semidefinite
(sometimes, the term {\em semipositive real} is used).
\item The matrix $A + \gamma UU^T$ is nonsingular; that is, ${\rm Ker} (A)\cap {\rm Ker} (U^T) = \{ {\bf 0} \}$.
\item The number of columns $k$ of $U$ satisfies  $k< n$ (and often $k\ll n$).
\item Forming $A+\gamma UU^T$ explicitly would lead to loss of sparsity/structure and should be avoided.
\end{itemize}

Linear systems of the form (\ref{eq1}) with such features arise for instance in the solution of the augmented Lagrangian 
formulation of saddle point problems, in the solution of reduced KKT systems from interior point methods in constrained
optimization, and in the solution of sparse-dense least-squares problems. 
 
In principle, approaches based on the Sherman-Morrison-Woodbury (SMW) matrix identity (see \cite{GVL4}) could
be used to solve (\ref{eq1}), assuming that systems with coefficient matrix $A$ can be solved efficiently. In
this paper we focus mostly on situations where such an approach is not viable; for example, $A$ can be singular,
and/or the problem size is too large for linear systems with $A$ to be solved accurately. Nevertheless, the
SMW identity will play an important role in this paper, albeit not applied directly to (\ref{eq1}). 

Our focus is on the construction of preconditioners tailored to problem (\ref{eq1}), to be used in conjunction with
Krylov subspace methods. Since Krylov methods only need the coefficient matrix in the form of matrix-vector products,
 it is not necessary to explicitly add the two terms comprising the matrix $A + \gamma UU^T$, as long as matrix-vector
products involving the matrices $A$, $U$ and $U^T$ can be performed efficiently. Our main goal,  then, is to develop preconditioners
that can be set up without explicitly forming $A+\gamma UU^T$, but working only with $A$, $U$ and $U^T$. The preconditioners must be
inexpensive to construct and to apply, and effective at producing fast convergence of the preconditioned Krylov method.
Robustness with respect to $\gamma$ is also highly desirable.

The remainder of the paper is organized as follows. In section \ref{sec2} we discuss motivating examples for the proposed solution
techniques,
which will be described in section \ref{sec3}. In section \ref{sec4} we briefly review some related work, while in section \ref{sec5}
we present some estimates on the eigenvalues of preconditioned matrices. Numerical experiments aimed at illustrating the performance of the
proposed solvers are presented in section \ref{sec6}; conclusions and suggestions for future work are given in section \ref{sec7}.

\section{Motivation}\label{sec2}
Large linear systems  of the form (\ref{eq1}) arise frequently in scientific computing.  Examples include:

\begin{itemize}
\item Augmented Lagrangian methods for PDE-related saddle point problems \cite{BB22,BO2006,BW13,F19,AL,Golub};
\item Solution of KKT systems in constrained optimization \cite{NW06};
\item Solution of sparse-dense least squares problems \cite{ST18,ST19,ST22};
\item Solution of PDEs modeling almost incompressible materials \cite[Ch.~8]{BBF};
\item Numerical solution of PDEs with nonlocal BC's \cite{Hu}.
\end{itemize}

Another situation where systems of the form (\ref{eq1}) may arise is when solving singular linear systems with a known kernel.

Next, we describe in some detail linear systems of the form (\ref{eq1}) from the first three of these applications.

\subsection{Linear systems from the augmented Lagrangian formulation}\label{2.1}

Consider the saddle point problem

\begin{displaymath}
{\mathcal A}\,  {\bf x} = \begin{bmatrix} A & B^T \\ B &  0 \end{bmatrix} \begin{bmatrix} u \\ p \end{bmatrix} = \begin{bmatrix}
f \\ g \end{bmatrix} = {\bf f}.
\end{displaymath}

\vspace{0.1in}
Such systems arise frequently from the finite element discretization of systems of PDEs, such as 
for example the Stokes equations, the Oseen problem (obtained from the steady Navier-Stokes equations
via Picard linearization), or from first-order system formulations of second-order elliptic PDEs; see, e.g., 
\cite{BBF,ESW14}.
A powerful approach to solve such systems is the one based on the augmented Lagrangian formulation \cite{AL}.
This method is  also widely used for solving constrained optimization problems \cite{NW06}.
The idea is to replace the original saddle point problem with an equivalent one of the form:

\begin{displaymath}
{\mathcal A}_\gamma \,  {\bf x} = \begin{bmatrix}
A + \gamma B^TW^{-1}B & B^T \\ B &  0
\end{bmatrix} \begin{bmatrix}
u \\ p \end{bmatrix} = \begin{bmatrix}
\hat  f \\ g \end{bmatrix} = \hat {\bf f}\,,
\end{displaymath}

\vspace{0.1in}
\noindent where $\gamma > 0$ and $\hat f := f + \gamma B^T W^{-1}g$. Here $W$ is usually  diagonal and positive definite. 
In the setting of finite element models of fluid flow, 
 $W$ is often the diagonal of the (pressure) mass matrix.
This new, augmented system is then solved by a Krylov subspace method with preconditioner

\begin{equation} \label{AL}
{\mathcal P}_\gamma = 
\begin{bmatrix}
A + \gamma B^TW^{-1}B & B^T \\
0 &  -\gamma^{-1} W\,.
\end{bmatrix}
\end{equation}

\vspace{0.1in}
The convergence of the preconditioned iteration is very fast  independent
of parameters like the mesh size and (for the Oseen problem) the viscosity, especially for large $ \gamma$ (see \cite{BO2006,F19});
to be practical, however, the preconditioner must be applied  inexactly. Evidently, the only difficulty in
applying the preconditioner is the solution of linear systems associated with the (1,1) block, i.e., 
a linear system with coefficient matrix $A + \gamma B^TW^{-1}B $ must be solved at each application
of the preconditioner. 
This linear system is of the form (\ref{eq1}) with $U = B^TW^{-1/2}$. 
Here $A$ is sparse, often block diagonal, and positive definite (or $A+A^T$ is).  
 Forming
$A + \gamma B^TW^{-1}B$ explicitly leads to loss of structure, and depending on the discretization used 
the resulting matrix can be considerably less sparse than $A$. 
The condition number increases with  $\gamma$, and solving this system is the main
challenge associated with the augmented Lagrangian approach; hence, the need 
to develop efficient iterative methods for it. In
\cite{BO2006} and \cite{F19}, specialized geometric multigrid methods have been
developed for this task. While these methods have proven efficient, they suffer from
the limitations of geometric multigrid methods, primarily the fact that they are tied to very 
specific types of meshes and discretizations.  Here we consider algebraic approaches that can
be applied to very general situations.  We note that our preconditioned iteration being non-stationary,
it will require the (inexact) augmented Lagrangian preconditioner  ${\mathcal P}_\gamma $ to
be used as a (right) preconditioner for Flexible GMRES \cite{FGMRES}.
For symmetric problems, the Flexible Conjugate Gradient method may also be viable under certain
conditions \cite{Notay}. 

The solution of linear systems of the form (\ref{eq1}) is also required by the 
Relaxed Dimensional Factorization (RDF) preconditioner \cite{RDF,RDF16}, which has been developed in
particular for the Oseen problem. Here $A$ is the discretization of a (scalar) convection-diffusion operator
and $U^T$ represents the discretization of the partial derivative with respect to one of the space variables.
For 3D problems, three such linear systems must be solved at each application of the preconditioner.

\subsection{Schur complement systems arising from Interior Point methods}

The solution of (smooth) constrained minimization problems by Interior Point (IP) methods (see \cite{NW06}) leads
to sequences of linear systems of the form

\begin{displaymath}
{\mathcal A}\,  {\bf x} = \begin{bmatrix}
H & -C^T & 0 \\
C &  0 & -I_k \\
0 & Z & \Lambda
\end{bmatrix} \begin{bmatrix}
\delta x \\ \delta \lambda \\ \delta z
\end{bmatrix} = \begin{bmatrix}
-r_1 \\ - r_2 \\ - r_3
\end{bmatrix} =  {\bf f}.
\end{displaymath}

\vspace{0.1in}
Here $H=H^T$ is the $n\times n$ Hessian of the objective function at the current point $\bar x$, $C$ is the $k\times n$ Jacobian of the constraints at the same point,
and $Z$ and $\Lambda$ are diagonal, positive definite $k\times k$ matrices associated with the current values of the Lagrange multipliers $\bar \lambda$
and slack variables $\bar z$, respectively. The right-hand sides contains the nonlinear residuals.
The variable $\delta z$ can easily be obtained using the last equation:

\begin{displaymath}
\delta z = - \Lambda ^{-1} (r_3 + Z \delta \lambda)
\end{displaymath}
\vspace{0.1in}
and substituted into the second (block) equation. 
This yields the reduced system

\begin{displaymath}
\begin{bmatrix}
H & -C^T \\
C &  \Lambda^{-1} Z
\end{bmatrix} \begin{bmatrix}
 \delta x \\ \delta \lambda
\end{bmatrix} = \begin{bmatrix}
-r_1 \\ -r_2 - \Lambda^{-1} r_3
\end{bmatrix}.
\end{displaymath}

\vspace{0.1in}
Eliminating $\delta \lambda$ leads to the  fully reduced (Schur complement) system

\begin{equation}\label{KKT}
 (H + C^T Z^{-1}\Lambda C) \delta x = - r_1 - C^TZ^{-1} (r_3 + \Lambda r_2)=:b.
 \end{equation}

\vspace{0.1in}
After solving for $\delta x$, the other unknowns $\delta \lambda$ and $\delta z$ are readily obtained.
This system is of the form (\ref{eq1}) with 
$A = H$,  $U = C^T(Z^{-1}\Lambda)^{1/2}$ and $\gamma = 1$. The coefficient matrix is nonsingular if and only if
$\text {Ker} (H) \cap \text{Ker} (C) = \{0\}$. 
The Hessian is usually positive (semi)definite, sparse and possibly structured. 
The coefficient matrix is nonsingular if and only if
$\text {Ker} (H) \cap \text{Ker} (C) = \{0\}$. 
Especially for very large problems, forming
$H + C^T Z^{-1}\Lambda C$ explicitly is generally undesirable.  Instead, we propose to solve the 
fully reduced system with PCG or another Krylov method using a suitable (algebraic) preconditioner. 

\subsection{Sparse-dense least squares problems}

Consider a large linear least squares (LS) problem of the form
\begin{displaymath}
\| B x - c \|_2 = \min,
\end{displaymath} 
where $B \in \mathbb{R}^{m\times n}$,  and $c\in \mathbb{R}^m$.  We assume that $B$ has full column rank 
and that it has the following structure:
\begin{displaymath}
B = \begin{bmatrix}
B_1\\B_2
\end{bmatrix}\,, \quad B_1 \in \mathbb{R}^{(m-k)\times n}, \quad B_2\in \mathbb{R}^{k\times n},
\end{displaymath} 
where $B_1$ is  sparse and $B_2$ is  dense.  
Then the LS problem is equivalent to the $n\times n$ system of normal equations:
\begin{equation} \label{normal}
B^T B x =  (B_1^T B_1 + B_2^TB_2) x = B^T c\,,
\end{equation}

\noindent which is of the form (1) with $ A = B_1^TB_1$, $U = B_2^T$, $\gamma = 1$ and $b = B^Tc$.
Once again, we would like to solve this system by  an iterative method, so the matrix $B^TB$ is never formed
explicitly. The main challenge is again
constructing an effective preconditioner. 

Recently, sparse-dense LS problems and various methods for their solution have been investigated in \cite{ST18,ST19,ST22}.

\section{The proposed method and its variants}\label{sec3}

In this section we first describe a stationary iterative method for solving (\ref{eq1}), then we develop
a more practical preconditioner based on this solver.  Although not strictly necessary, we assume that
the coefficient matrix $A_\gamma := A + \gamma UU^T$ is nonsingular for all $\gamma >0$. 
As we have seen in the previous section, in many applications 
the matrix $A$ (or $A+A^T$) is usually at least positive semidefinite,
and we will make this assumption.  Then the nonsingularity of $A_\gamma$ is equivalent to the condition
\begin{displaymath}
\textnormal{Ker}\, (A) \cap \textnormal{Ker}\, (U^T) = \{0\}\,,
\end{displaymath}
hence if $A$ is symmetric positive semidefinite and the above condition holds, then $A_\gamma$ is symmetric positive definite (SPD) for all 
$\gamma > 0$.

\vspace{0.1in}
When $A$ is nonsingular, one could use the Sherman-Morrison-Woodbury (SMW) formula to
solve (1), but this can be expensive for large problems. Recall that SMW states that
\begin{displaymath}
(A+ \gamma UU^T)^{-1} = A^{-1} - \gamma A^{-1}U(I_k + \gamma U^TA^{-1}U)^{-1}U^TA^{-1},
\end{displaymath} 
hence  $k+1$ linear systems of size $n\times n$  with coefficient matrix $A$ and an additional $k\times k$ system must be solved ``exactly". 
Another possibility would be to build
preconditioners based on the SMW formula, where the action of $A^{-1}$ is replaced by some inexpensive approximation,
but our attempts in this direction were unsuccessful. Also, $A$ is frequently singular.

\vspace{0.1in}
When $k$ is small (say, $k=10$ or less), then any good preconditioner for $A$ (or $A+\alpha I_n$, $\alpha > 0$, if
$A$ is singular) can be expected to give good results. In fact, using  a Krylov method preconditioned with $A^{-1}$ yields convergence in
at most $k+1$ steps, hence convergence should be fast if a good approximation of the action of $A^{-1}$ is available.  
However, if $k$ is in the hundreds (or larger), this approach is not appealing. 

\vspace{0.1in}
Hence, it is necessary to take into account both $A$ and $\gamma UU^T$ when building the preconditioner.
We do this by forming a suitable product preconditioner, as follows.

\vspace{0.1in}
Let $\alpha > 0$ be a parameter and consider the two splittings
\begin{displaymath}
A+ \gamma UU^T = (A + \alpha I_n) - (\alpha I_n - \gamma UU^T)
\end{displaymath} 
and
\begin{displaymath}
A +\gamma UU^T= (\alpha I_n + \gamma UU^T) - (\alpha I_n - A).
\end{displaymath} 

\vspace{0.1in}
Note that both $A+\alpha I_n$ and $\alpha I_n + \gamma UU^T$ are invertible under our assumptions. Let $x^{(0)} \in \mathbb{R}^n$ and consider the  alternating iteration

\begin{equation}\label{iteration}
\begin{cases}
\, (A + \alpha I_n) \, x^{(k+1/2)} = (\alpha I_n - \gamma UU^T) \, x^{(k)} + b\,, \\
\, (\alpha I_n + \gamma UU^T)\, x^{(k+1)} = (\alpha I_n - A) \, x^{(k+1/2)} + b\,,
\end{cases}
\end{equation}
with $k=0,1,\ldots$ This alternating scheme is analogous to that of other well-known
iterative methods like ADI \cite{ADI}, HSS \cite{HSS}, MHSS \cite{HSS}, RDF \cite{RDF}, etc.
Similar to these methods, we have the following convergence result.

\begin{theorem}\label{conv_thm}
If $A + A^T$ is positive definite, the sequence  $\{ x^{(k)} \}$ defined by (\ref{iteration}) converges,
as $k\to \infty $, to the unique solution of equation (1), for any choice of $x^{(0)}$ and for all
$\alpha > 0$.
\end{theorem}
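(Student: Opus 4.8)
The plan is to follow the standard template for convergence analysis of two-step alternating (ADI/HSS-type) iterations. First I would eliminate the intermediate vector $x^{(k+1/2)}$ from \eqref{iteration} to obtain a single-step stationary iteration $x^{(k+1)} = T x^{(k)} + c$, where the iteration matrix factors as
\begin{displaymath}
T = (\alpha I_n + \gamma UU^T)^{-1}(\alpha I_n - A)\,(A + \alpha I_n)^{-1}(\alpha I_n - \gamma UU^T)\,.
\end{displaymath}
Because $A_\gamma$ is nonsingular and $b$ is a genuine solution of \eqref{eq1}, the exact solution $x^\ast$ is a fixed point of this map, so the error $e^{(k)} := x^{(k)} - x^\ast$ satisfies $e^{(k+1)} = T e^{(k)}$. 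Convergence for every $x^{(0)}$ is therefore equivalent to $\rho(T) < 1$.

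The second step is to bound $\rho(T)$ via a similarity transformation, exploiting that the two outer factors are built from the \emph{same} matrices that appear on the inside. Conjugating $T$ by $(\alpha I_n + \gamma UU^T)$ (or, symmetrically, by $(A+\alpha I_n)$) gives a matrix similar to $T$ of the form $M_1 M_2$ with $M_1 = (\alpha I_n - A)(A+\alpha I_n)^{-1}$ and $M_2 = (\alpha I_n - \gamma UU^T)(\alpha I_n + \gamma UU^T)^{-1}$, so that $\rho(T) \le \|M_1\|_2 \,\|M_2\|_2$. Now $M_2$ is a symmetric Cayley-type transform of the symmetric positive semidefinite matrix $\gamma UU^T$: its eigenvalues are $(\alpha - \mu)/(\alpha + \mu)$ with $\mu \ge 0$, all lying in $(-1,1]$, hence $\|M_2\|_2 \le 1$. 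For $M_1$ one writes $\|M_1 v\|_2^2 - \|v\|_2^2$ after setting $w = (A+\alpha I_n)^{-1} v$, which reduces to $-2\alpha\, w^T(A+A^T)w \le 0$; since $A+A^T$ is positive definite by hypothesis and $A + \alpha I_n$ is nonsingular, this inequality is strict for $v \ne 0$, giving $\|M_1\|_2 < 1$. Combining the two estimates yields $\rho(T) \le \|M_1\|_2 < 1$, which proves convergence and, by uniqueness of the fixed point (guaranteed by nonsingularity of $A_\gamma$), convergence to the solution of \eqref{eq1}.

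The one point requiring a little care — and the likely main obstacle — is that $M_1$ is not symmetric, so $\|M_1\|_2 < 1$ does not follow merely from its eigenvalues lying in the unit disk; one genuinely needs the quadratic-form argument above (equivalently, the observation that $(\alpha I_n - A)(\alpha I_n + A)^{-1}$ is a contraction whenever $A + A^T \succ 0$, a classical fact about dissipative matrices). A secondary subtlety is that $\gamma UU^T$ is only positive \emph{semi}definite, so $M_2$ has an eigenvalue equal to $1$ on $\mathrm{Ker}(U^T)$; this is harmless because it is $M_1$, not $M_2$, that supplies the strict contraction, and the product of a contraction with a nonexpansive map is a strict contraction. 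I would present the estimate in the $\|\cdot\|_2$ norm throughout; a slicker alternative is to note that $M_1$ is a contraction in the norm induced by the symmetric positive definite part, but the straightforward Euclidean argument suffices. Finally, I would remark that the same bound $\rho(T) \le \|M_1\|_2$ shows the convergence rate is governed by $A$ alone and is independent of $U$ and $\gamma$.
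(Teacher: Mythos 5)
Your proposal is correct and follows essentially the same route as the paper: eliminate the intermediate iterate, pass to the similar matrix $(\alpha I_n - A)(\alpha I_n + A)^{-1}(\alpha I_n - \gamma UU^T)(\alpha I_n + \gamma UU^T)^{-1}$, and bound the spectral radius by the product of the two factor norms, with the strict contraction coming from the $A$-factor and the $UU^T$-factor being merely nonexpansive. The only difference is that you prove the key inequality $\|(\alpha I_n - A)(\alpha I_n + A)^{-1}\|_2 < 1$ directly via the quadratic-form identity, whereas the paper cites it as Kellogg's Lemma.
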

\begin{proof}
First we observe that under our assumptions the linear system (\ref{eq1}) has a unique solution. 
Eliminating the intermediate vector $x^{(k+1/2)}$, we can rewrite (\ref{iteration}) as a one-step 
stationary iteration of the form
\begin{displaymath}
x^{(k+1)} = T_\alpha x^{(k)} + d \,, \qquad k = 0, 1, \ldots ,
\end{displaymath}
where $d$ is a suitable vector and
the iteration matrix $T_\alpha$ is given by
\begin{displaymath}
T_\alpha = (\alpha I_n + \gamma UU^T)^{-1}(\alpha I_n - A)(\alpha I_n + A)^{-1}(\alpha I_n - \gamma UU^T).
\end{displaymath} 
This matrix is similar to 
\begin{displaymath}
 \hat T_\alpha = (\alpha I_n - A)(\alpha I _n + A)^{-1}(\alpha I_n - \gamma UU^T)(\alpha I_n + \gamma UU^T)^{-1}\,,
\end{displaymath}
hence the spectral radius of $T_\alpha$ satisfies
\begin{displaymath}
 \rho (T_\alpha) = \rho (\hat T_\alpha) \le \|\hat T_\alpha\|_2 \le
\|(\alpha I_n - A)(\alpha I_n + A)^{-1}\|_2 \|(\alpha I_n - \gamma UU^T)(\alpha I_n + \gamma UU^T)^{-1}\|_2\,.
\end{displaymath}
The first norm on the right-hand side is strictly less than 1 for $\alpha >0$ since the symmetric part of $A$ is positive definite
(this result is sometimes referred to as ``Kellogg's Lemma", see  \cite[page 13]{Marchuk}), while the second one is obviously
equal to 1 for all $\alpha > 0$, since $UU^T$ is symmetric positive semidefinite and singular. 
Therefore $\rho (T_\alpha) < 1$ and the iteration is convergent
for all $\alpha > 0$. 
\end{proof}

\begin{remark} \label{Rm1}
If $A+A^T$ is only positive semidefinite, then all we can say is that $\rho (T_\alpha) \le 1$ and that 1 is 
not an eigenvalue of $T_\alpha$, for all $\alpha >0$.  
In this case we can still use the iterates $x^{(k)}$ to construct a convergent sequence that approximates the unique
solution of (\ref{eq1}); indeed, it is enough to replace $T_\alpha$ with $(1-\beta)\, I_n + \beta \, T_\alpha$, with $\beta \in (0,1)$,
to obtain a convergent sequence; see \cite[p.~27]{BG2004}. 
\end{remark}

\vspace{0.1in}
In practice, the stationary iteration (\ref{iteration}) may not be very efficient. It requires exactly solving two linear systems
with matrices $A+\alpha I_n$ and $\alpha I_n + \gamma UU^T$ at each step; even if these two systems are generally easier to solve than 
the original system (\ref{eq1}), convergence can be slow and the overall method expensive. 
To turn this into a practical method, we will use it as a preconditioner for a Krylov-type method
rather than as a stationary iterative scheme.  This will also allow inexact solves.

\vspace{0.1in}
To derive the preconditioner we eliminate $x^{(k+1/2)}$ from (\ref{iteration}) and write the iterative scheme as the fixed-point iteration
\begin{displaymath}
x^{(k+1)} = T_\alpha x^{(k)} + c =  (I_n - P_\alpha ^{-1}A_\gamma ) x^{(k)} + P_\alpha ^{-1}b\,.
\end{displaymath} 
 An easy calculation (see also \cite{BS97}) reveals that the preconditioner $P_\alpha $ is 
given, in factored form, by

\begin{equation}\label{prec}
 P_\alpha  = \frac{1}{2\alpha} (A + \alpha I_n)(\alpha I_n + \gamma UU^T).
 \end{equation}

\vspace{0.1in}
The scalar factor $\frac{1}{2\alpha}$ in (\ref{prec}) is immaterial for preconditioning, and can be ignored in practice.
Applying this preconditioner within a Krylov method requires, at each step, the solution of two linear systems
with coefficient matrices  $A+\alpha I_n$ and $\alpha I_n + \gamma UU^T$.
Consider first solves involving $A + \alpha I_n$.  If $A$ is sparse and/or structured (e.g., block diagonal, banded, Toeplitz, etc.),
then so is $A+\alpha I_n$.  If expensive,
exact solves with $A+ \alpha I_n$ can be replaced, if necessary, with inexact solves
using either a good preconditioner for $A+\alpha I_n$, such as an incomplete factorization, or a fixed number (one may be enough)
of cycles of some multigrid method if $A$ originates from a second-order elliptic PDE.
We note here a typical trade-off: larger values of $\alpha$ make solves with $A+\alpha I_n$ easier (since the matrix becomes better conditioned
and more diagonally dominant), but may degrade the performance of the preconditioner $P_\alpha$.
In practice, we found that high accuracy is not required in the solution of linear systems associated with $A+\alpha I_n$.

\vspace{0.1in}
On the other hand, numerical experiments suggest that the solution of linear systems involving $\alpha I_n + \gamma UU^T$ is more critical.
Note that this matrix is SPD for all $\alpha > 0$, but ill-conditioned for small $\alpha$ (or very large $\gamma$).
The Sherman-Morrison-Woodbury formula yields
\begin{equation}\label{SMW}
(\alpha I_n + \gamma UU^T)^{-1} =
\alpha^{-1} I_n - \alpha^{-1} \gamma U(\alpha I_k + \gamma U^TU)^{-1}U^T.
\end{equation}

The main cost is the solution at each step of a $k\times k$ linear system with matrix $\alpha I_k + \gamma U^TU$, 
which can be performed by Cholesky factorization (computed once and for all at the outset) or possibly
by a suitable inner PCG iteration or maybe an (algebraic) MG method.
Formula (\ref{SMW}) shows why this $k\times k$ linear system must be solved accurately: any error affecting 
$(\alpha I_k + \gamma U^TU)^{-1}$, and therefore $U(\alpha I_k + \gamma U^TU)^{-1}U^T$, will be be amplified
by the factor $\gamma/\alpha$, which will be quite large for small $\alpha$ and large or even moderate values of $\gamma$.
Note that for linear systems arising from the augmented Lagrangian method applied to incompressible flow problems, 
the matrix $\alpha I_k + \gamma U^TU$ is  a (shifted and scaled)
discrete pressure Laplacian. Also, this matrix remains constant in the course of 
the numerical solution of the Navier--Stokes equations using  Picard or Newton
iteration, whereas the matrix $A$ changes. 
 Hence, the cost of a  Cholesky
factorization of $\alpha I_k + \gamma U^TU$ can be amortized over many nonlinear (or time) steps.   
Similar observations apply if one uses an algebraic multigrid (AMG) solver instead of a direct factorization,
 in the sense that the preconditioner set-up needs to be done only once. 

\subsection{Some variants}\label{sec_sub}
Building on the main idea, different variants of the preconditioner can be envisioned. 
If $A$ happens to be nonsingular and linear systems with $A$ are not too difficult to solve (inexactly or perhaps even exactly),
 then it may not be necessary to shift $A$, leading to a preconditioner
of the form
\begin{displaymath}
\hat P_\alpha = A(\alpha I_n + \gamma UU^T).
\end{displaymath} 
Note that Theorem \ref{conv_thm}, however, is no longer applicable in general. 

\vspace{0.1in}
When $A$ is symmetric positive semidefinite and the usual assumption
$ \textnormal{Ker}\, (A) \cap \textnormal{Ker}\, (U^T) = \{0\}$ holds (so that $A_\gamma$ is SPD for $\gamma >0$), one would like to solve system (\ref{eq1})
using the preconditioned conjugate gradient (PCG) method. Unless $A$ and $UU^T$ commute, however, the preconditioner (\ref{prec}) 
is nonsymmetric, and the preconditioned matrix $P_\alpha^{-1} A_\gamma$ is generally not symmetrizable. 
 In this case we can consider a symmetrized version of the preconditioner,
for example
\begin{equation}\label{symm}
 P_{\alpha}^S = \frac{1}{2\alpha}  L\, (\alpha I_n + \gamma UU^T)\, L^T\,,
 \end{equation}
where $L$ is the Cholesky (or incomplete Cholesky) factor of $A+ \alpha I_n$ (or of $A$ itself  if $A$  is SPD and not very
ill-conditioned).  Again, Theorem \ref{conv_thm} no longer holds, in general.

\vspace{0.1in}
In some cases (but not always) the performance of the method improves if $A_\gamma$ is diagonally
scaled so that it has unit diagonal prior to forming the preconditioner. 
 Note that the matrix
\begin{displaymath}
D_\gamma : = \text{diag}\, (A + \gamma UU^T)
\end{displaymath} 
can be easily computed:  
\begin{displaymath}
(D_\gamma)_{ii} = a_{ii} + \gamma \|u_i^T\|_2 ^2\,,
\end{displaymath}
where $u_i^T$ is the $i$th row of $U$. 
It is easy to see that applying the preconditioner to the diagonally scaled matrix
$D_\gamma ^{-1/2} A_\gamma D_\gamma ^{-1/2}$ is mathematically equivalent to using the modified preconditioner
\begin{displaymath}
(A +\alpha D_\gamma)D_\gamma ^{-1} (\alpha D_\gamma + \gamma UU^T)
\end{displaymath} 
on the original matrix. We emphasize that whether this diagonal scaling is beneficial or not
appears to be strongly problem-dependent. Numerical experiments indicate that such scaling can lead to 
a degradation of performance in some cases. 
Clearly, different SPD matrices (other than the diagonal of $A_\gamma$) could be used for $D_\gamma$. 

One can also conceive two-parameter variants, $P_{\alpha, \beta}  = (A + \alpha I_n) (\beta I_n + \gamma UU^T)$, but we shall
not pursue such generalizations here. 

Finally, we observe that the extension to the complex case (under the obvious assumptions)  is straightforward.

\section{Related work}\label{sec4}
There seems to have been relatively little work on the development of specific solvers for linear systems of the form (\ref{eq1}).
The few papers we are aware of either advocate for the use of the SMW formula directly applied to (\ref{eq1}), or treat
specialized methods for very specific situations. 

In the recent papers \cite{Lu1,Lu2}, the author addresses the solution of linear systems closely related to (\ref{eq1}) 
by means of an auxiliary space preconditioning approach. This approach is specific to finite element discretizations
of PDE problems involving the De Rham complex and the Hodge Laplacian, such as those arising from the solution
of the curl-curl formulation of Maxwell's equations. 

Potentially relevant to our approach is the work on robust multigrid preconditioners for  finite element discretizations of the operator
 $\mathcal{ L }=  I - \gamma\,  \text {grad}\,  \text {div}$,
defined on the space $H(\text{div})$. Indeed, for the type of incompressible flow problems described in
section \ref{2.1} the matrix  $\alpha I_n + \gamma UU^T$ may be regarded as a discretized version of this operator, which
plays an important role in several applications; see, e.g.,  \cite{Hdiv97,Hdiv00,MW2011}.  In cases where direct use of the SMW 
 formula for solving
linear systems with matrix $\alpha I_n + \gamma UU^T$ is not viable, for example in very large 3D situations where Cholesky
factorization of a $k\times k$ matrix may be too expensive, such multigrid methods could be an attractive alternative in view of their
robustness and fast convergence. 

Finally, we mention the work in \cite{Marin}, although it concerns a somewhat different type of problem, namely, linear systems with
coefficient matrix of the form $A + UCU^T$ with $A=A^T$ and $C = - C^T$. Here $U \in \mathbb{R}^ {n\times k}$ and
$C\in \mathbb{R}^{k\times k}$.  We point out that 
for this type of problem, our approach reduces to the well-known HSS  method (or preconditioner), see \cite{HSS}.

\section{Eigenvalue bounds}\label{sec5}

As is also the case for other solvers and preconditioners like ADI or HSS, the choice of $\alpha$ is important for the
success of the method. It is not easy to determine an ``optimal" or even good value of $\alpha$ a priori. Usually it is necessary
to resort to heuristics, one of which will be discussed in the next section.  Here we attempt to shed some light on the effect
of $\alpha$ on the spectrum of the preconditioned matrix $P_\alpha^{-1} A_\gamma$.  
 Note that by virtue of Theorem \ref{conv_thm}
and Remark \ref{Rm1}, we know that the spectrum of $\sigma(P_\alpha ^{-1}A_\gamma)$
lies in the disk of center $(1,0)$ and radius 1 in the complex plane, for all $\alpha > 0$. In particular, all the eigenvalues have imaginary part 
bounded by 1 in magnitude.

First we consider the case where $A$ is SPD. Let $0<\lambda_1 \le \cdots \le  \lambda_n$ be the eigenvalues of $A$.
As shown in the proof of Theorem \ref{conv_thm}, the spectral radius of the iteration matrix 
$T_\alpha = I_n - P_\alpha^{-1} A_\gamma$ of the
stationary iteration (\ref{iteration}) satisfies 
\begin{displaymath}
\rho (T_\alpha) \le \| (\alpha I_n - A) (\alpha I_n + A)^{-1}\|_2 = \max_{1\le i \le n} \frac{|\alpha - \lambda_i|}{|\alpha + \lambda_i|}\,,
\end{displaymath}
for all $\alpha > 0$.  The upper bound on $\rho (T_\alpha)$ is minimized, as is well known, taking $\alpha = \sqrt{\lambda_1 \lambda_n}$.  
A similar observation, incidentally, has been made  for the HSS method in \cite{HSS}, with the eigenvalues of $H= \frac{1}{2} (A + A^T)$
playing the role of the $\lambda_i$'s. This choice of $\alpha$ is completely independent of $\gamma$ and $U$, and thus it is not likely
to be always a good choice, especially when the method is used as a preconditioner rather than as a stationary solver.

In order to state the next result, we note that there is no loss of generality if we assume that
$\|A\|_2 = 1$ and $\| U\|_2 = 1$, since we can always divide both sides of (\ref{eq1}) by $\|A\|_2$,
replace $\gamma$ with $\tilde \gamma := \gamma \|U\|_2^2/\|A\|_2$ and $U$ with $\tilde U := U/\|U\|_2$. 

\begin{theorem}\label{thm2}
Let $A$ be such that $A+A^T$ is positive definite. Assume that $\|A\|_2 = \|U\|_2 =1$. Let $P_\alpha$ be given by (\ref{prec}). If $(\lambda, x)$ is a real eigenpair
of the preconditioned matrix $P_\alpha ^{-1} A_\gamma$, with $\|x\|_2 = 1$, then $\lambda \in [\mu, 2)$
 where 
 \begin{equation}\label{mu}
 \mu = \frac {\alpha\, \lambda_{\min} (A+A^T)}  {(1+\alpha)(\alpha+\gamma)}.
 \end{equation}\\
 If $(\eta, x)$ is an eigenpair of $A$ with $x\in \textnormal{Ker}\, (U^T)$, then $x$ is eigenvector of $P_\alpha ^{-1} A_\gamma$
 associated to the eigenvalue
 \begin{equation}\label{last}
 \lambda =   \frac{2\, \eta}{\eta + \alpha}
 \end{equation}
 (independent of $\gamma$). 
 \end{theorem}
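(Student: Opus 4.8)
The plan is to prove the two assertions separately. The second one is essentially a one-line direct computation; the first, which concerns real eigenpairs, I would handle by viewing $\lambda$ as a generalized Rayleigh quotient for the matrix pencil $(A_\gamma,P_\alpha)$, combining a lower bound on the numerator with an upper bound on the denominator, and using the spectral estimate behind Theorem~\ref{conv_thm} both to fix the sign of the denominator and to supply the right endpoint $\lambda<2$.

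So let $(\lambda,x)$ be a real eigenpair with $\|x\|_2=1$. Writing the eigenvalue equation as $A_\gamma x=\lambda P_\alpha x$ and taking the inner product with $x$ gives $x^T A_\gamma x=\lambda\, x^T P_\alpha x$. For the numerator, since $x$ is real, $x^T A_\gamma x=\tfrac12 x^T(A+A^T)x+\gamma\|U^Tx\|_2^2\ge \tfrac12\lambda_{\min}(A+A^T)>0$ (using $\|x\|_2=1$). For the denominator, the factored form $P_\alpha=\tfrac{1}{2\alpha}(A+\alpha I_n)(\alpha I_n+\gamma UU^T)$ together with the normalizations $\|A\|_2=\|U\|_2=1$ yields $|x^T P_\alpha x|\le\|P_\alpha\|_2\le\tfrac{1}{2\alpha}\|A+\alpha I_n\|_2\,\|\alpha I_n+\gamma UU^T\|_2\le\tfrac{1}{2\alpha}(1+\alpha)(\alpha+\gamma)$. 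Now, since $A+A^T$ is positive definite, the proof of Theorem~\ref{conv_thm} gives $\rho(T_\alpha)<1$ strictly, so every eigenvalue of $P_\alpha^{-1}A_\gamma=I_n-T_\alpha$ lies in the open disk of center $1$ and radius $1$; in particular a real $\lambda$ satisfies $0<\lambda<2$. This already gives the right endpoint, and combined with $x^T A_\gamma x>0$ it forces $x^T P_\alpha x=x^T A_\gamma x/\lambda>0$; hence $\lambda=\dfrac{x^T A_\gamma x}{x^T P_\alpha x}\ge\dfrac{\tfrac12\lambda_{\min}(A+A^T)}{\tfrac{1}{2\alpha}(1+\alpha)(\alpha+\gamma)}=\mu$, so $\lambda\in[\mu,2)$.

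For the second assertion, assume $Ax=\eta x$ and $U^Tx=0$. Then $UU^Tx=0$, so $A_\gamma x=\eta x$, $(A+\alpha I_n)x=(\eta+\alpha)x$ and $(\alpha I_n+\gamma UU^T)x=\alpha x$; moreover $\eta+\alpha\neq0$, since positive definiteness of $A+A^T$ forces $\mathrm{Re}(\eta)>0$. Applying $P_\alpha^{-1}=2\alpha(\alpha I_n+\gamma UU^T)^{-1}(A+\alpha I_n)^{-1}$ to $A_\gamma x=\eta x$ and peeling off the two factors gives $P_\alpha^{-1}A_\gamma x=\dfrac{2\alpha\,\eta}{\alpha(\eta+\alpha)}\,x=\dfrac{2\eta}{\eta+\alpha}\,x$, which is visibly independent of $\gamma$.

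The only delicate point is the positivity of $x^T P_\alpha x$: because $P_\alpha$ is in general non-symmetric this cannot be read off directly from the factorization, and must instead be extracted from the strict contractivity $\rho(T_\alpha)<1$ already established in Theorem~\ref{conv_thm} (Kellogg's Lemma). Everything else is routine norm bookkeeping, and I do not expect it to cause trouble.
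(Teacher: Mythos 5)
Your proof is correct and follows essentially the same route as the paper's: both express $\lambda$ as the generalized Rayleigh quotient $x^T A_\gamma x / x^T P_\alpha x$, bound the numerator below by $\tfrac12\lambda_{\min}(A+A^T)$ and the denominator above by $\tfrac{1}{2\alpha}(1+\alpha)(\alpha+\gamma)$, and invoke Theorem~\ref{conv_thm} to get $0<\lambda<2$ (and hence the sign of the denominator). The only cosmetic differences are that the paper bounds the denominator term by term on the expanded form of $2\alpha P_\alpha$ rather than via $\|P_\alpha\|_2$, and that your direct verification that a vector in $\textnormal{Ker}\,(U^T)$ is genuinely an eigenvector of $P_\alpha^{-1}A_\gamma$ is, if anything, slightly more complete than reading $\lambda$ off the Rayleigh-quotient formula.
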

 \begin{proof}
 The eigenpairs $(\lambda,x)$ of $P_\alpha ^{-1} A_\gamma$ (or, equivalently, of $A_\gamma P_\alpha^{-1}$) satisfy the
 generalized eigenvalue problem
 \begin{equation}\label{gep}
 (A + \gamma UU^T)\,  x = \frac{\lambda}{2\alpha} \,  (\alpha A + \gamma AUU^T + \alpha^2 I_n + \alpha\, \gamma\, UU^T)\,  x\,.
 \end{equation}
 Premultiplying by $x^*$ and using $x^*x=1$, we obtain
 \begin{equation}\label{lambda}
 \lambda = \frac{2\alpha\, (x^*Ax + \gamma \|U^Tx\|_2^2)}{\alpha \, x^*Ax +\gamma\,  x^*AUU^Tx + \alpha^2 + \alpha\, \gamma\, \|U^Tx\|_2^2} \,.
 \end{equation}
 \\
  If $\lambda$ is real then $x$ can be taken real and $x^*$ becomes $x^T$.  Clearly $0 < \lambda < 2$ as 
 an immediate consequence of Theorem \ref{conv_thm},
 which states that $|\lambda - 1| < 1$. To prove the lower bound on $\lambda$, note that a lower bound
 on the numerator in (\ref{lambda})  is given by $\alpha \,  \lambda_{\min} (A+ A^T)$,
 while an upper bound for the denominator is given by $\alpha + \gamma + \alpha^2 + \alpha \gamma = (1+\alpha)(\alpha + \gamma)$,
 yielding the value (\ref{mu}) for the lower bound on $\lambda$. 
 
If $U^Tx=0$ we immediately obtain (\ref{last}) from (\ref{lambda}). Note that all  such $\lambda$'s (if there are any) are necessarily real if $A=A^T$.
 \end{proof}
 
 \vspace{0.1in}
 \begin{remark}
 If we assume $A$ to be symmetric positive definite, then using (\ref{lambda}) one can easily establish the following  lower bound on the real part
 of the eigenvalues of $P_\alpha ^{-1} A_\gamma$, whether real or not:
 \begin{equation}\label{lower_bound}
 Re(\lambda) \ge \frac {2\, \alpha (\alpha + 1) (\alpha + \gamma) \,  \lambda_{\min}  (A) } {(\alpha +1)^2 (\alpha + \gamma)^2 + \gamma^2}\,.
\end{equation}
We computed the eigenvalues in several cases with an SPD matrix $A$ (see next section) and we 
found that usually the lower bound given by (\ref{mu}) yields a much better estimate of the smallest
real part of $\lambda \in \sigma(P_\alpha ^{-1}A_\gamma)$ than (\ref{lower_bound}), suggesting that the eigenvalue of smallest real part is actually real
in many cases.  
Our experiments confirm that when $A$ is SPD, the smallest eigenvalue of $P_\alpha ^{-1} A_\gamma$ is often real,
but this is not true in general.  
Note that all the bounds still hold if $A$ is singular, but give no useful information in this case. 
 \end{remark}
 
 \begin{remark}
 If $A$ is singular and if $(\lambda, x)$ is an eigenpair of $P_\alpha ^{-1} A_\gamma$ with $x\in \textnormal{Ker} (A^T)$, then $U^Tx\ne 0$ and
 $$\lambda = \frac{2}{1 + \frac{\alpha}{\gamma \|U^Tx\|_2^2}}$$
(independent of $A$).  Indeed, we have $x^*Ax = 0$ and $x^*AUU^Tx = 0$.  Thus, (\ref{lambda}) reduces to
$$ \lambda = \frac {2\, \alpha\,  \gamma \|U^Tx\|_2^2}{\alpha^2 + \alpha\, \gamma\, \|U^Tx\|_2^2}\,$$
Clearly $U^Tx \ne 0$ since we are assuming that $A_\gamma$ is nonsingular.  Dividing the numerator and denominator  by $\alpha\, \gamma\, \|U^Tx\|_2^2$ we 
obtain the result. Note that such a $\lambda$, if it exists, is real.
\end{remark}

\begin{remark}
Theorem \ref{thm2} is of limited use for guiding in the choice of $\alpha$. It is easy to see that the lower bound (\ref{mu}) is maximized 
(when $A$ is nonsingular) by taking $\alpha =\sqrt{\gamma}$.  While such a choice of $\alpha$ may prevent the smallest
eigenvalue of the preconditioned matrix from getting too close to 0, in most cases such value of $\alpha$ is suboptimal. We also
note that the lower bound approaches zero as $\alpha \to 0$ and $\gamma \to \infty$, yet small values of $\alpha$ often
yield faster convergence, even for large values of $\gamma$, suggesting that a better clustering of the preconditioned spectrum is
achieved for smaller values of $\alpha$. 
One should also keep in mind that the result assumes that the preconditioner is applied  exactly, which is often not the case in practice,
and that eigenvalues alone may not be descriptive of the convergence of Krylov subspace methods like GMRES.
Nevertheless, setting $\alpha = \sqrt{\gamma}$ could be a reasonable choice in the absence of other information, provided
of course that the problem is scaled so that $\|A\|_2 = \|U\|_2 = 1$.
 \end{remark}
 
\vspace{0.1in}
We conclude this section with a result concerning the symmetrized preconditioner (\ref{symm}).

\begin{theorem} \label{thm3}
Let $A$ be SPD, $A_\gamma = A + \gamma UU^T$, with $\|A\|_2 = 1$ and $\|U\|_2 = 1$, and let $P_\alpha^S = 
\frac{1}{2\alpha} L\, (\alpha I_n + \gamma UU^T)\,L^T$
where $L$ is the Cholesky factor of $A+\alpha I_n$.  Then the eigenvalues $\lambda$ of the preconditioned matrix $(P_\alpha^S)^{-1} A_\gamma$ are all real
and lie in the interval
\begin{equation}\label{eig_symm}
\frac { 2\, \alpha \, \lambda_{\min} (A) } { (1 + \alpha) (\alpha + \gamma)} < \lambda < \frac { 2 + 2\, \gamma } {\lambda_{\min} (A) + \alpha} \,.
\end{equation}
\end{theorem}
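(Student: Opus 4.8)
The plan is to work with the symmetric definite pencil $(A_\gamma, P_\alpha^S)$, where $A_\gamma = A + \gamma UU^T$. Since $A$ is SPD, $A_\gamma$ is SPD; and since $L$ is the Cholesky factor of the SPD matrix $A+\alpha I_n$ (hence invertible) and $\alpha I_n + \gamma UU^T$ is SPD for $\alpha>0$, the matrix $P_\alpha^S = \frac{1}{2\alpha}L(\alpha I_n + \gamma UU^T)L^T$ is SPD as well. Consequently $(P_\alpha^S)^{-1}A_\gamma$ is similar to $(P_\alpha^S)^{-1/2} A_\gamma (P_\alpha^S)^{-1/2}$, which is symmetric (in fact SPD); this already gives that all eigenvalues are real (and positive), and that each eigenvalue $\lambda$ is the value of the generalized Rayleigh quotient $R(x) = (x^T A_\gamma x)/(x^T P_\alpha^S x)$ at a corresponding eigenvector, with $\min_{x\neq 0}R(x)$ and $\max_{x\neq 0}R(x)$ themselves eigenvalues. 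So it suffices to bound $R(x)$ for all $x\neq 0$.

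Using $LL^T = A+\alpha I_n$, I would first rewrite
\[
R(x) = \frac{2\alpha\,\bigl(x^T A x + \gamma\,\|U^T x\|_2^2\bigr)}{\alpha\,x^T(A+\alpha I_n)x + \gamma\,\|U^T L^T x\|_2^2}\,.
\]
For the lower bound I would discard the nonnegative term $\gamma\|U^T x\|_2^2$ in the numerator and use $x^T A x \ge \lambda_{\min}(A)\|x\|_2^2$; in the denominator I would use $\|A\|_2 = 1$ (so $x^T(A+\alpha I_n)x \le (1+\alpha)\|x\|_2^2$) together with $\|U\|_2 = 1$, which gives $\|U^T L^T x\|_2^2 \le \|L^T x\|_2^2 = x^T(A+\alpha I_n)x \le (1+\alpha)\|x\|_2^2$; this yields $R(x) \ge 2\alpha\lambda_{\min}(A)/[(1+\alpha)(\alpha+\gamma)]$. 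For the upper bound I would instead bound the numerator from above, $x^T A x + \gamma\|U^T x\|_2^2 \le (1+\gamma)\|x\|_2^2$ (again via $\|A\|_2 = \|U\|_2 = 1$), and the denominator from below by discarding $\gamma\|U^T L^T x\|_2^2$ and using $x^T(A+\alpha I_n)x \ge (\lambda_{\min}(A)+\alpha)\|x\|_2^2$, obtaining $R(x) \le 2(1+\gamma)/(\lambda_{\min}(A)+\alpha)$. For the strictness of both inequalities, note that equality in either chain would force $x$ to lie simultaneously in the eigenspaces of $\lambda_{\min}(A)$ and of $\lambda_{\max}(A)$, hence $A = I_n$; in that degenerate case $L = \sqrt{1+\alpha}\,I_n$, the preconditioned matrix commutes with $\alpha I_n + \gamma UU^T$, and a direct inspection of its eigenvalues $\tfrac{2\alpha}{1+\alpha}\cdot\tfrac{1+\gamma\sigma^2}{\alpha+\gamma\sigma^2}$ (with $\sigma\in[0,1]$ a singular value of $U$, since $\|U\|_2=1$) shows that they lie strictly inside the stated interval.

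The only slightly delicate point is the handling of the term $\gamma\|U^T L^T x\|_2^2$ in the denominator: a crude estimate of it could dominate and ruin both bounds. The observation that makes everything go through is that $LL^T = A+\alpha I_n$ lets one control $\|U^T L^T x\|_2^2$ by $x^T(A+\alpha I_n)x$ (needed for the lower bound) and simply drop it (for the upper bound); once this is in place, the rest is a routine application of $\|A\|_2 = \|U\|_2 = 1$ and of the extreme-eigenvalue bounds for the Rayleigh quotient of the SPD matrix $A$.
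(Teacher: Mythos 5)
Your proposal is correct and follows essentially the same route as the paper's proof: both reduce to the generalized Rayleigh quotient of the SPD pencil $(A_\gamma, P_\alpha^S)$ and bound numerator and denominator using $\|A\|_2=\|U\|_2=1$ together with the key estimate $\|U^TL^Tx\|_2^2\le\|L^Tx\|_2^2=x^T(A+\alpha I_n)x\le(1+\alpha)\|x\|_2^2$. The only difference is that you additionally justify the strictness of the inequalities, which the paper passes over.
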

\begin{proof}
That the eigenvalues of the preconditioned matrix are real (and positive) is an immediate consequence of the fact that both
the preconditioner $P_\alpha^S$ in (\ref{symm}) and the coefficient matrix $A_\gamma = A + \gamma UU^T$ are SPD. If $(\lambda, x)$
is an eigenpair of $(P_\alpha^S)^{-1}A_\gamma$ then 
\begin{displaymath}
\lambda = \frac{2\, \alpha \, \,x^T (A + \gamma UU^T) \, x}{x^T L (\alpha I_n + \gamma UU^T)L^T x} = 
\frac{2\, \alpha \, x^T Ax + 2\, \alpha \,  \gamma \, \|U^Tx\|_2^2} { \alpha \, x^T A x + \alpha^2 \, +\gamma \,  \| U^TL^T x\|_2^2} \,.
\end{displaymath} 
The lower bound in (\ref{eig_symm}) is obtained my minimizing the numerator and maximizing the denominator in the last expression,
keeping in mind that $\| x \|_2 = 1$ and that $\|U^T L^T \|_2^2 \le \| L^T \|_2 ^2 = \| A + \alpha I_n \|_2 = 1 + \alpha$.  Similarly, the 
upper bound is obtained by maximizing the numerator and minimizing the denominator in the last expression.
\end{proof}

We remark that the lower bound in (\ref{eig_symm}) is identical to the one in (\ref{mu}) since now $A=A^T$.  The bound suggests the
possibility of a high condition number for $(P_\alpha^S)^{-1}A_\gamma$ if $A$ is ill-conditioned (small $\lambda_{\min} (A)$), if $\alpha$
is very small, or if $\gamma $ is very large.  On the other hand, it is well known that estimates of the rate of convergence of the PCG
method based on the condition number can be very pessimistic, particularly in the presence of eigenvalue clustering. Also, we found
that the upper bound in (\ref{eig_symm}) tends to be rather loose.

\section{Numerical experiments}\label{sec6}
In this section we describe the results of numerical experiments with several matrices from the three 
application areas discussed in section \ref{sec2}.   
First we present the results of some computations aimed at assessing the quality of the eigenvalue bounds
on $Re(\lambda)$ given  in Theorem \ref{thm2},   
then we provide an evaluation of
the performance of the proposed preconditioners in terms of iteration counts and timings on a selection of test problems.
All the computations were performed using  MATLAB.R2020b on a laptop with a 4 Intel core  i7-8565U CPU @ 1.80GHz  - 1.99 GHz
and 16.0GB RAM.

\subsection{Eigenvalue bounds}
Here we consider matrices arising from the following three problems:
\begin{enumerate}
\item[(i)] A stationary Stokes problem discretized with Q2-Q1 mixed finite elements on a uniform $64\times 64$ mesh;
\item[(ii)] A stationary Oseen problem with viscosity $\nu = 0.01$ discretized with Q2-Q1 mixed finite elements on a stretched $64\times 64$ mesh;
\item[(iii)]   A Schur complement arising from a KKT system in constrained optimization;
\end{enumerate}

The first two matrices are generated using IFISS \cite{IFISS} and they are of the form $A_\gamma = A + \gamma B^T W^{-1} B$
where $A$ is the stiffness velocity matrix, $B^T$ the discrete gradient, and $W$ is the diagonal of the pressure mass matrix. Here $U^T = W^{-1/2}B$,
$n=8450$ and $k=1089$. For the Stokes problem $A$ is SPD, for the Oseen problem $A\ne A^T$ but $A+A^T$ is SPD. In both cases the 
flow problem being modeled is the 2D  leaky-lid driven cavity problem, see \cite{ESW14}.

The third matrix is a Schur complement $H + C^T Z^{-1}\Lambda C$ obtained from reduction of a KKT system in constrained optimization
\cite{MM}. Here $A=H$ is SPD, $U^T=(Z^{-1}\Lambda)^{1/2}C^T$, $\gamma =1$, $n=2500$ and $k=700$.

In all cases $A $ and $U$ have been normalized so that $\|A\|_2 = \|U\|_2 =1$.
In \Cref{T1,T2,T3} we report the minimum and maximum real part of the eigenvalues of $P_\alpha ^{-1} A_\gamma$ 
together with the value of the estimate $\mu$ in (\ref{mu}). In \Cref{T1,T2} we vary $\alpha$ and $\gamma$, 
in \Cref{T3} we fix $\gamma = 1$ and vary $\alpha$.

First we comment on the results for the linear systems arising from the incompressible Stokes and Oseen problems. 
In both cases the expression (\ref{mu}), which strictly speaking is a lower bound only for the real eigenvalues of the preconditioned
matrix, is always a lower bound for the smallest $Re(\lambda)$ (here we only show a few values of $\alpha$ and $\gamma$, but the
same was found in many more cases). We checked and found that for the preconditioned matrix
$P_\alpha ^{-1}A_\gamma$ associated with the Stokes problem, for which $A$ is SPD, the eigenvalue of smallest real part
is in fact always real; hence, (\ref{mu}) is guaranteed to be a lower bound, as is confirmed by the results in \Cref{T1}.  On the other hand, in the case
of the matrix associated with the Oseen problem, for some combinations of $\alpha$ and $\gamma$ the eigenvalue with
smallest real part was found to be non-real.  Even in these cases, however, the estimate (\ref{mu}) yielded a lower bound
on $Re(\lambda)$.  

Generally speaking we see that the lower bound is reasonably tight,
typically within an order of magnitude of the true value except in a few cases.

\begin{table}[h!]
\footnotesize
\caption{Results for eigenvalues of preconditioned $A_\gamma$ matrix from Stokes problem with $64 \times 64$ mesh and Q2-Q1  discretization.  
Here $A$ is SPD. In boldface the value $\alpha = \sqrt{\gamma}$.}\label{T1}
\begin{center}
  \begin{tabular}{|c|c|c|c|c|} \hline
   $\gamma$ & $\alpha$ & $\max{(Re(\lambda))}$ & $\min{(Re(\lambda))}$& lower bound (\ref{mu}) \\
\hline
 0.1  &  0.1     &  1.818e+00  &  1.700e-02  &  5.709e-04  \\ 
      &  {\bf 0.3162}  &  1.519e+00  &  5.409e-03  &  7.250e-04  \\ 
      &  5.0     &  3.333e-01  &  3.430e-04  &  2.052e-04 \\ 
\hline 
 1.0  &  0.5     &  1.333e+00  &  6.590e-03  &  2.791e-04  \\ 
      &  {\bf 1.0}     &  1.000e+00  &  3.300e-03  &  3.140e-04  \\ 
      &  5.0     &  4.683e-01  &  6.609e-04  &  1.744e-04  \\ 
\hline
 50.0 &  1.0     &  1.532e+00  &  3.323e-03  &  1.231e-05  \\ 
      &  {\bf 7.0711}  &  1.658e+00  &  4.707e-04  &  1.928e-05  \\ 
      &  10.0    &  1.606e+00  &  3.328e-04  &  1.903e-05  \\  \hline
  \end{tabular}
\end{center}
\end{table}

\begin{table}[h!]
\footnotesize
\caption[]{Results for eigenvalues of preconditioned $A_\gamma$ matrix from Oseen problem with a stretched $64 \times 64$ mesh, $\nu=0.01$, and Q2-Q1 discretization.
Here $A\ne A^T$ but $A+A^T$ is SPD.
 In boldface the value $\alpha = \sqrt{\gamma}$.}\label{T2}
 \begin{center}
\begin{tabular}{|c|c|c|c|c|}
\hline
$\gamma$ & $\alpha$ & $\max{(Re(\lambda))}$ & $\min{(Re(\lambda))}$& lower bound (\ref{mu}) \\
\hline
 0.1  &  0.1           &  1.818e+00  &  5.317e-03  &  1.581e-04 \\ 
      &  {\bf 0.3162}  &  1.520e+00  &  1.684e-03  &  2.008e-04 \\ 
      &  5.0           &  3.333e-01  &  1.066e-04  &  5.684e-05 \\ 
\hline 
 1.0  &  0.5           &  1.333e+00  &  2.691e-03  &  7.730e-05  \\ 
      &  {\bf 1.0}     &  1.000e+00  &  1.346e-03  &  8.697e-05 \\ 
      &  5.0           &  4.423e-01  &  2.694e-04  &  4.831e-05 \\ 
\hline
 50.0 &  1.0           &  1.693e+00  &  9.185e-04  &  3.410e-06  \\ 
      &  {\bf 7.0711}  &  1.668e+00  &  1.300e-04  &  5.340e-06 \\
      &  10.0          &  1.613e+00  &  9.189e-05  &  5.271e-06 \\
\hline 
\end{tabular} 
\end{center}
\end{table}

\begin{table}[h!]
\footnotesize
\caption[]{Results for eigenvalues of preconditioned Schur complement matrix from KKT system 
(problem \texttt{mosarqp1}  from Maros and M\'esz\'aros collection). Here $A=H$ is SPD.} \label{T3}
\begin{center}
\begin{tabular}{| c | c | c | c |}
\hline
$\alpha$ &  $\max{(Re(\lambda))}$ & $\min{(Re(\lambda))}$& lower bound (\ref{mu}) \\
\hline
 0.001   &  1.998e+00  &  6.508e-03  &  7.343e-04  \\ 
 0.01    &  1.980e+00  &  6.321e-02  &  7.213e-03  \\
 0.1     &  1.818e+00  &  4.834e-01  &  6.081e-02  \\ 
 0.5     &  1.333e+00  &  8.484e-01  &  1.635e-01  \\ 
{\bf  1.0}     &  1.000e+00  &  5.384e-01  &  1.839e-01  \\ 
 5.0     &  4.335e-01  &  1.372e-01  &  1.022e-01  \\ 
 10.0    &  2.457e-01  &  7.106e-02  &  6.081e-02  \\ 
 20.0    &  1.313e-01  &  3.617e-02  &  3.337e-02  \\ 
\hline 
\end{tabular} 
\end{center}
\end{table}


Looking at the results reported in \Cref{T3}, we see that the bound is even more accurate for this
(non PDE-related) problem. Furthermore, the eigenvalue distribution for this  test case is especially favorable
for the convergence of preconditioned iterations, suggesting fast convergence. For  this  particular problem we checked
and found that the eigenvalue of smallest real part is always real, and actually {\em all} the eigenvalues of the 
preconditioned matrix are real.

We also note that in all cases the largest value of the lower bound corresponds to $\alpha = {\sqrt \gamma}$, which is expected since the right-hand side of (\ref{mu}) 
attains its maximum for this value of $\alpha$.


\subsection{Test results for problems from incompressible fluid mechanics}
Here we present results obtained with the proposed approach on linear systems of the form
\begin{equation} \label{A-gamma}
 (A + \gamma B^T W^{-1} B) \, x = b 
 \end{equation}
associated with Stokes and Oseen problems. The matrices arise from Q2-Q1 discretizations 
of the driven cavity problem. We are interested in the performance of the solver with respect
to the mesh size and the parameters $\alpha$ and $\gamma$. For both Stokes and Oseen, $A$
is block diagonal but $A_\gamma = A + \gamma B^TW^{-1}B$ is not. 

\vspace{0.1in}
In our experiments we use right-preconditioned restarted GMRES with restart $m=20$ \cite{saad}.
The ideal preconditioner 
\begin{displaymath}
P_\alpha = (A + \alpha I_n) (\alpha I_n + \gamma B^TW^{-1}B)
\end{displaymath} 
is replaced by the inexact variant
\begin{equation} \label{inexact}
\tilde P_\alpha = M_{\alpha} (\alpha I_n + \gamma B^TW^{-1}B)\,,
\end{equation}
where $M_\alpha = \tilde L \tilde L^T$ is the no-fill incomplete Cholesky  (or, in the case of Oseen, incomplete LU)
factorization of $A+\alpha I_n$;
see, e.g., \cite{Benzi02}. This approximation is inexpensive in terms of cost and memory and it greatly
reduces the cost of the proposed preconditioner without adversely impacting its effectiveness.  On the other hand, the factor 
$(\alpha I  + \gamma B^TW^{-1}B)$ is inverted exactly via the SMW
formula (\ref{SMW}) with a sparse Cholesky factorization of  the $k\times k$ matrix $\alpha I_k + \gamma W^{-1/2}BB^TW^{-1/2}$.
The sparse Cholesky factorization makes use of the Approximate Minimum Degree (AMD) reordering strategy to reduce fill-in \cite{AMD}.
It is important to note that in the solution of the Navier--Stokes equations by Picard iteration, the matrices $B$ and $W$ of the Oseen problem
remain constant throughout the solution process, hence the Cholesky factorization of $\alpha I_k + \gamma W^{-1/2}BB^TW^{-1/2}$
needs to be performed only once at the beginning of the process. The matrix $A$, on the other hand, changes at each 
Picard step (since the convective term changes). Recomputing the no-fill incomplete LU factorization of $A+\alpha I_n$, however, is inexpensive.

\begin{figure}[h!]
\centering
\includegraphics[width=.45\linewidth]{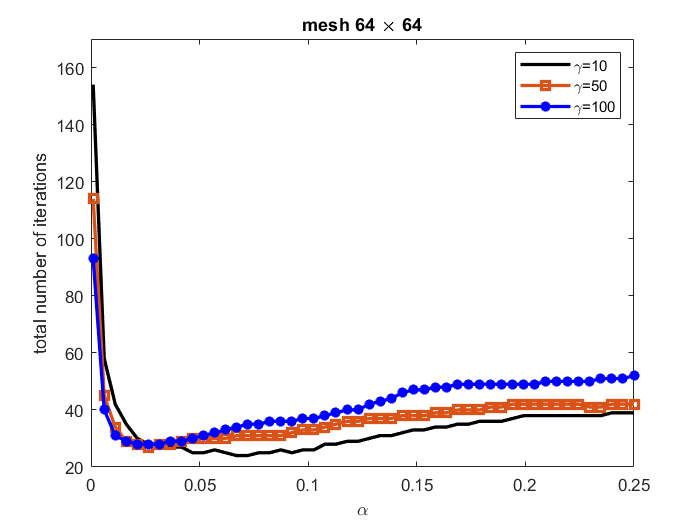}
\includegraphics[width=.45\linewidth]{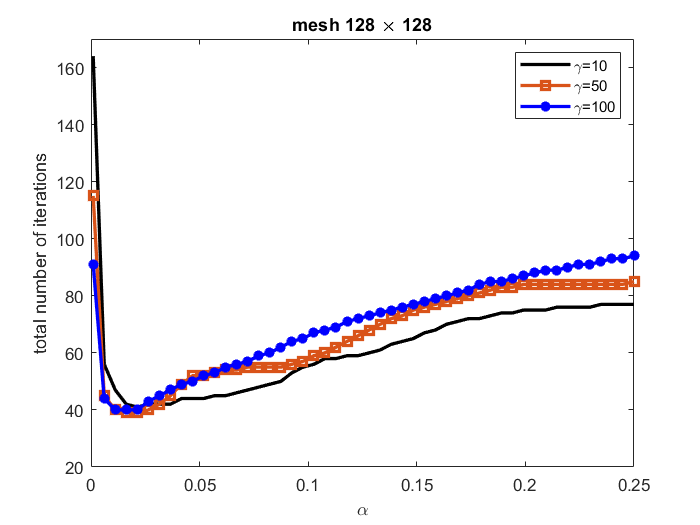}
\caption[]{Number of preconditioned iterations versus $\alpha$ for the linear systems (\ref{A-gamma}) arising from the 2D Stokes problem 
with Q2-Q1 finite element discretization on $64 \times 64$ mesh (left) and on $128 \times 128$
mesh (right) for different values of $\gamma$.} \label{FIG1}
\end{figure}

\begin{figure}[h!]
\centering
\includegraphics[width=.45\linewidth]{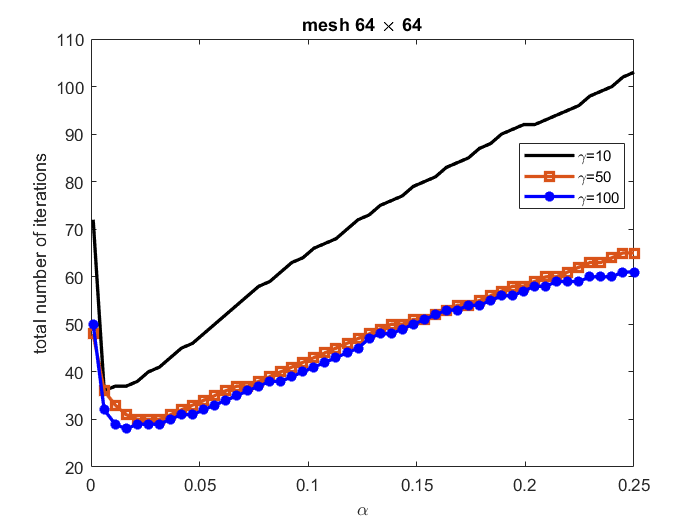}
\includegraphics[width=.45\linewidth]{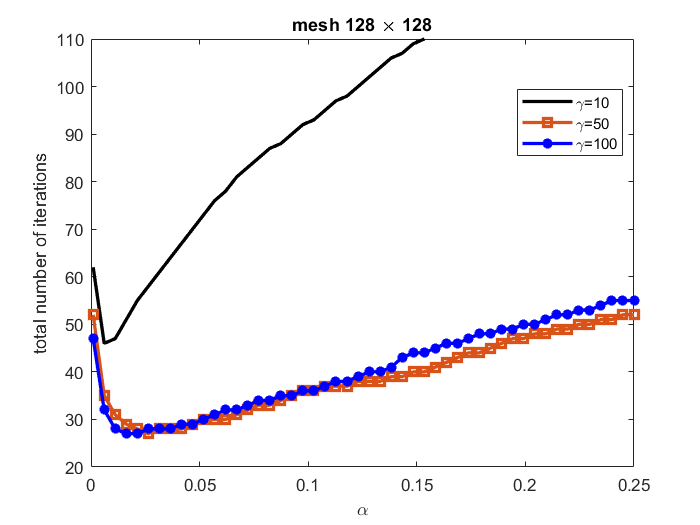}
\caption[]{Number of preconditioned  iterations versus $\alpha$ for the linear systems (\ref{A-gamma}) arising from  2D Oseen problem with $\nu = 0.01$, Q2-Q1 finite element discretization on stretched $64 \times 64$ (left) and  $128 \times 128$
 (right) meshes for different values of $\gamma$.} \label{FIG2}
\end{figure}

In \Cref{FIG1,FIG2} we show results for linear systems of the form (\ref{A-gamma}) arising from
the Stokes and Oseen problems discretized on two meshes of size $64\times 64$ and $128\times 128$, for three
different values of $\gamma$ ($=10, 50, 100$). Uniform meshes are used for the Stokes-related problem, stretched ones for
the Oseen-related one.
 We apply a symmetric diagonal scaling to $A_\gamma$ prior to 
constructing the preconditioner.
The plots show the number of right preconditioned GMRES(20) 
iterations (with the preconditioner (\ref{inexact})) as a function of the parameter $\alpha$. The stopping
criterion used is $\|b - A_\gamma x_k\|_2 < 10^{-6} \|b\|_2$, with initial guess $x_0 = 0$.  We mention that this
stopping criterion is much more stringent than the one that would be used when performing inexact preconditioner
solves in the context of the augmented Lagrangian preconditioner (\ref{AL}).

For the Stokes-related problem, the first observation is that  the fastest convergence is obtained for small values of
$\alpha$ and the number of iterations 
 is fairly insensitive to the value of $\gamma$, at least for the range of $\alpha$ values showed. 
 Also, if $\alpha$ is not too small, the curves are
 relatively flat and the number of iterations increases slowly with $\alpha$. 
 As the mesh is refined the number of iterations increases, and the optimum $\alpha$ decreases slightly. 
 
 When passing from the Stokes to the Oseen-related problem (with viscosity $\nu = 0.01$), the behavior of the solver
 is strikingly different. The convergence behavior is more sensitive to the value of $\gamma$; 
 the fastest convergence is observed for larger values of $\gamma$, for which the matrix $A_\gamma$ is more ill-conditioned.
 This is probably due to the fact that the term $\gamma B^TW^{-1}B$ becomes dominant, and the factor $\alpha I_n + \gamma B^TW^{-1}B$
 (with small $\alpha$) is a good approximation to $A_\gamma$.
 The location of the optimal value of $\alpha$ appears to be roughly the same as for the Stokes problem, but the
 curves are less flat and the number of iterations increases more rapidly as $\alpha$ moves away from the optimum.
 The most striking phenomenon, however, is that (contrary to the case of Stokes) the number of iterations appears to
 decrease as the mesh is refined. This finding is very welcome in view of the fact that the augmented Lagrangian approach is
 especially effective in the (challenging) case of the Oseen problem with small viscosity, as shown, e.g.,  in \cite{BO2006,F19}. 
 We also note that the optimal $\alpha$ is  independent of $\gamma$ when $\gamma$ is large enough.

\begin{figure}[htbp]
\centering
\includegraphics[width=1.\linewidth]{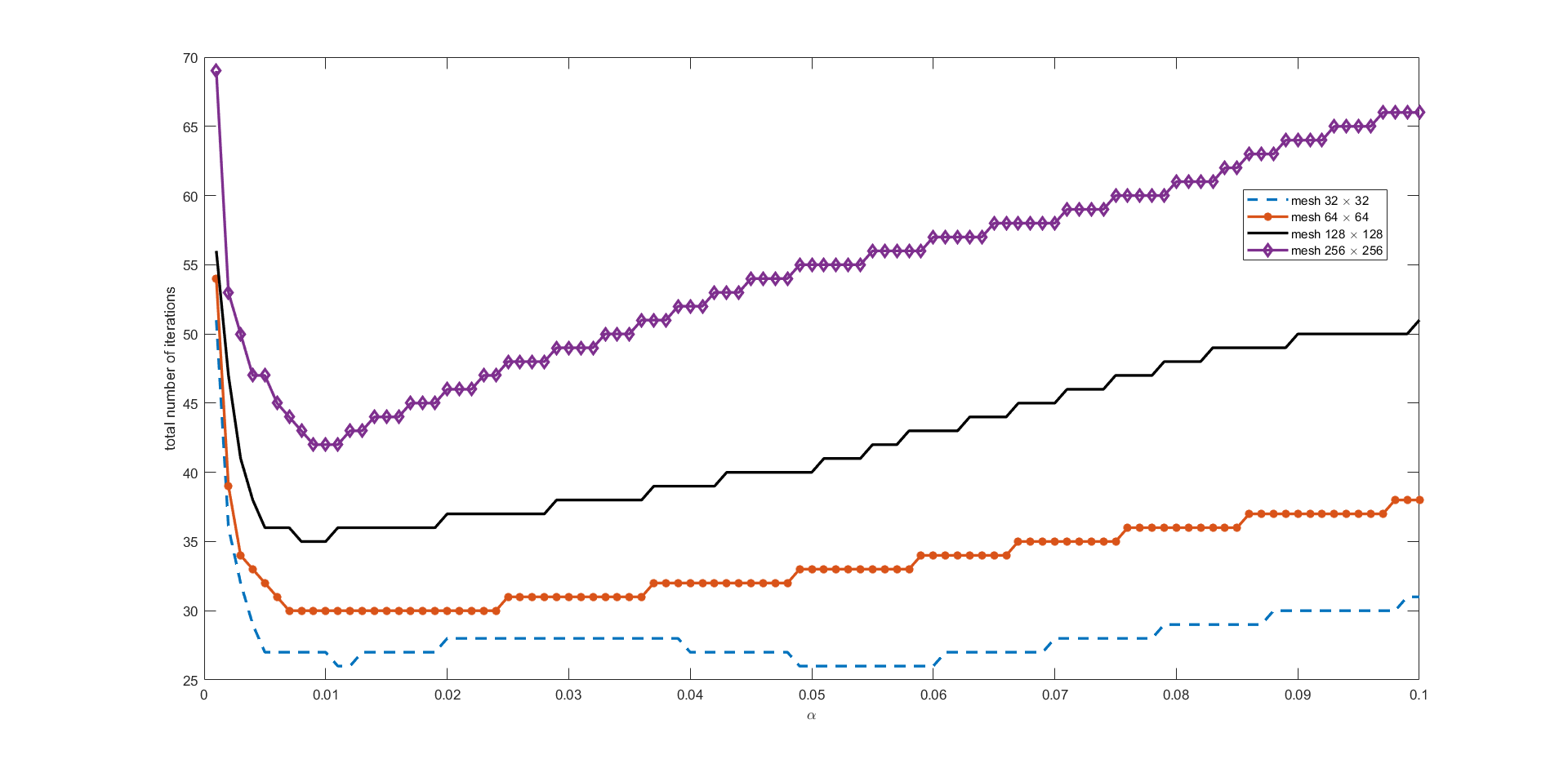}
\vspace{-0.5in}
\caption[]{Number of  iterations versus $\alpha$ for the linear systems (\ref{A-gamma}) arising from
the 2D Oseen problem with $\nu = 0.1$, $\gamma = 100$, Q2-Q1 finite 
element discretization and 
different mesh sizes. GMRES restart $m=20$, convergence residual  tolerance = $10^{-6}$. 
Diagonal scaling is applied.} \label{FIG3}
\end{figure}

\begin{figure}[htbp]
\centering
\includegraphics[width=1.\linewidth]{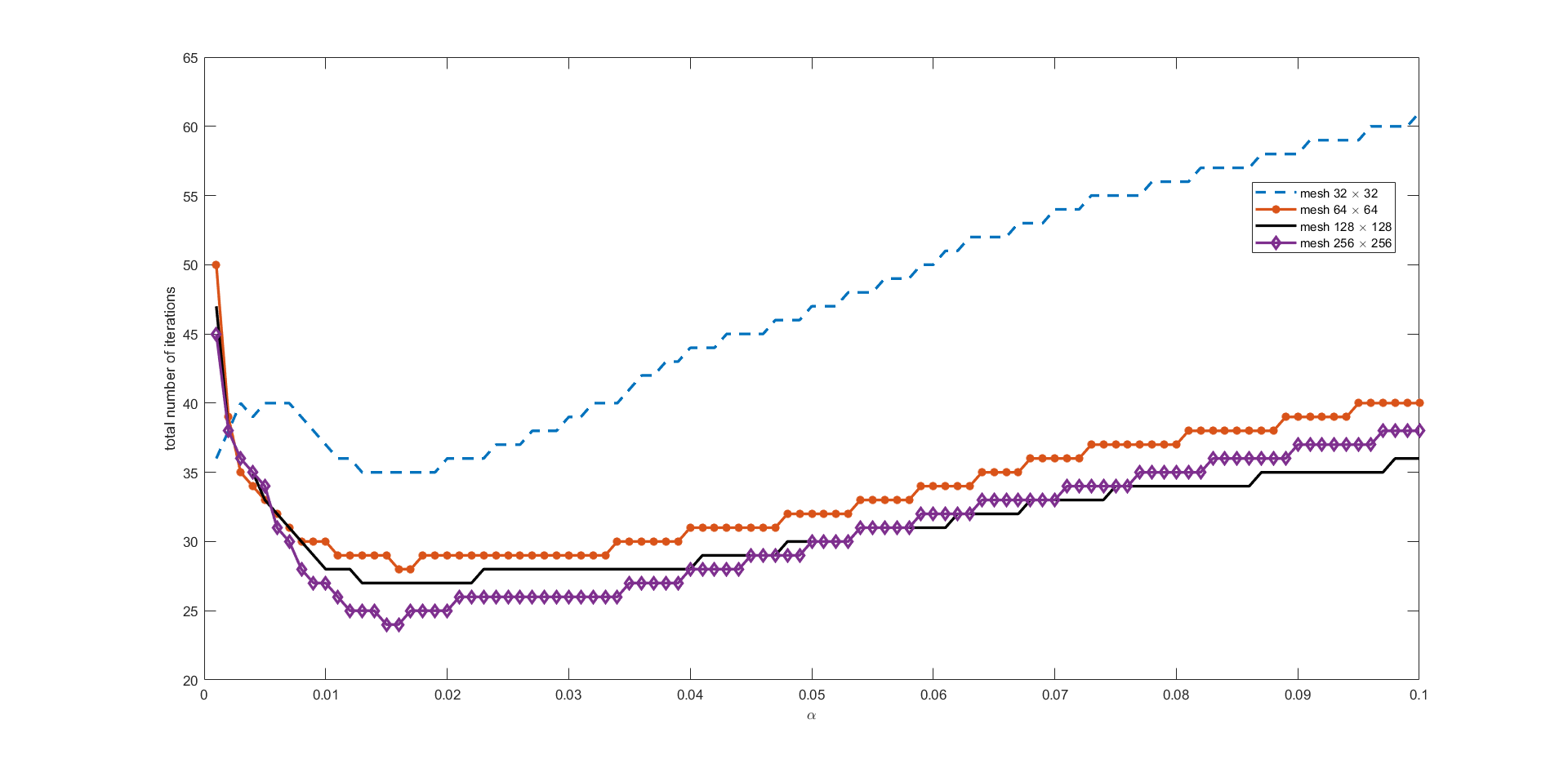}
\vspace{-0.5in}
\caption[]{Number of  iterations versus $\alpha$ for the linear systems (\ref{A-gamma}) arising from
the 2D Oseen problem with $\nu = 0.01$, $\gamma = 100$, Q2-Q1 finite 
element discretization and 
different mesh sizes. GMRES restart $m=20$, convergence residual  tolerance = $10^{-6}$. 
Diagonal scaling is applied.} \label{FIG4}
\end{figure}

\begin{figure}[htbp]
\centering
\includegraphics[width=1.\linewidth]{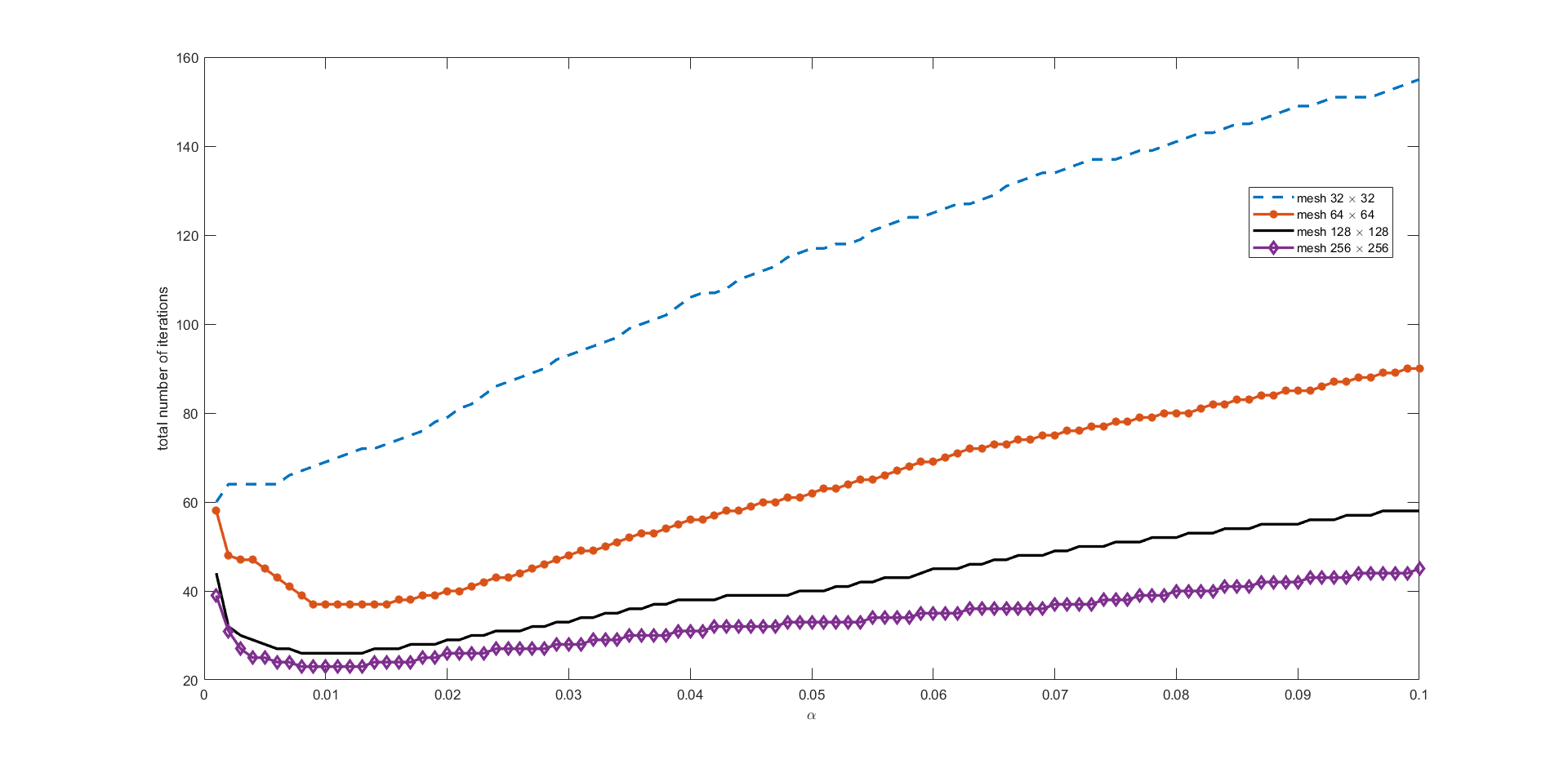}
\vspace{-0.5in}
\caption[]{Number of  iterations versus $\alpha$ for the linear systems (\ref{A-gamma}) arising from
the 2D Oseen problem with $\nu = 0.002$, $\gamma = 100$, Q2-Q1 finite 
element discretization and 
different mesh sizes. GMRES restart $m=20$, convergence residual  tolerance = $10^{-6}$. 
Diagonal scaling is applied.} \label{FIG5}
\end{figure}

 In \Cref{FIG3,FIG4,FIG5} we show results for the Oseen-related problem for three different values of the 
 viscosity $\nu$, discretized on four different (stretched) meshes. The value of $\gamma$
 is fixed at 100. Several observations are in order. The number of iterations does not seem to be 
 very sensitive to $\alpha$, as long as this is small, and the best $\alpha$ is about the same in all cases. 
 The behavior of the solver improves as the mesh is refined and as the viscosity gets smaller, i.e., the harder the 
 problem, the faster the convergence. This is especially welcome given that the augmented
 Lagrangian-based preconditioner is best employed on problems with small viscosity.

\begin{table}[htbp]
\footnotesize
\caption[]{\small{Linear system from Oseen problem with $\gamma = 100$. For each mesh we use $\alpha = 0.011$
for $\nu=0.1$, $\alpha=0.0135$ for $\nu =0.01$, and $\alpha = 0.009 $ for $\nu = 0.002$. Stretched Q2-Q1 finite element discretization. 
 Diagonal scaling is used.  $M_{\alpha}$ is ILU(0) of $A+\alpha I$, $\tilde P_\alpha = M_\alpha (\alpha I_n+ \gamma U U^T)$ with sparse
 Cholesky factorization of $k\times k$ matrix in SMW formula. M-Time and P-Time are the preconditioner construction times. Sol-Time is the time for the preconditioned iteration to achieve a relative residual norm below $10^{-6}$. All timings are in seconds.} } \label{T4}
 \begin{center}
\begin{tabular}{|c|c|c|c|c|c|c|c|c|c|}
\hline
& & & & \multicolumn{2}{c|}{ $M_{\alpha}$ } & \multicolumn{2}{c|}{ $\tilde P_{\alpha}$ } \\
\hline
$\nu$ & mesh & M-Time & P-Time  & Sol-Time & Its & Sol-Time & Its  \\
\hline
0.1 & $ 32 \times 32$ & 7.52e-04  & 1.39e-03  & 5.29e-02 & 173 &  6.75e-03& 26  \\  
& $ 64 \times 64$ & 1.22e-03  & 2.92e-03  & 3.83e-01 & 469 &  3.61e-02& 30    \\ 
& $128 \times 128 $ & 6.64e-03  & 1.54e-02  & 2.39e+00 & 603 &  2.41e-01& 36  \\  
& $256 \times 256 $ &  2.48e-02  & 8.63e-02  & 2.03e+01 & 919 &  2.21e+00& 42  \\  
\hline 
0.01&$ 32 \times 32$ &  7.26e-04  & 1.34e-03  & 1.56e-01 & 412 &  1.93e-02& 35  \\  
& $ 64 \times 64$ & 3.15e-03  & 1.48e-02  & 7.44e-01 & 466 &  5.61e-02& 29   \\ 
& $128 \times 128 $ & 1.29e-02  & 3.13e-02  & 3.05e+00 & 493 &  3.31e-01& 27  \\  
& $256 \times 256 $ &   3.95e-02  & 1.38e-01  & 1.52e+01 & 486 &  1.53e+00& 25   \\  
\hline 
0.002 & $ 32 \times 32$ & 4.86e-04  & 9.36e-04  & 1.37e-01 & 754 &  1.42e-02& 68 \\  
& $ 64 \times 64$ & 1.35e-03  & 3.05e-03  & 4.89e-01 & 522 &  4.85e-02& 37 \\ 
& $128 \times 128 $ & 6.16e-03  & 1.51e-02  & 4.09e+00 & 1037 &  1.80e-01& 26 \\  
& $256 \times 256 $ &  2.44e-02  & 8.26e-02  & 1.52e+01 & 767 &  1.06e+00& 23  \\  
\hline 
\end{tabular}
\end{center}
\end{table}

In \Cref{T4} we report iteration counts and timings for the Oseen-related problems with
the three values of $\nu = 0.1, 0.01, 0.002$ for different mesh sizes and $\gamma = 100$.
For completeness we also include results obtained using the 
simple no-fill  incomplete factorization $\tilde L\tilde U \approx A + \alpha I_n$ as a preconditioner for the system (\ref{A-gamma}). Not surprisingly, this preconditioner yields
very slow convergence, showing the importance of including the $\gamma$-dependent term in the preconditioner. Note that the cost of
forming the preconditioner (\ref{inexact}) is quite low, only slightly higher than the cost of $M_\alpha$, and that the iterative solution time dominates the
overall cost.  The results confirm the effectiveness of the proposed preconditioner, especially for small values of $\nu$
and finer meshes.  We also note that the cost of the preconditioner construction is low compared to the overall solution costs, and it 
is dominated by the Cholesky  factorization of the $k\times k$ matrix in the SMW formula. As already mentioned, when solving the Navier-Stokes equations  this factorization
needs to be performed only once since the matrix being factored does not change in the course of the Picard iteration.

\subsection{Test results on Schur complements from KKT systems}
Here we present the results of some tests on two linear systems of the form (\ref{KKT}).
In the first problem (\texttt{stcqp2} from \cite{MM}) we have $n=4097$, $k=2052$. The Schur
complement  matrix $H + C^T(Z^{-1}\Lambda) C$ has condition number $2.63 \times 10^4$.  

In the second problem (\texttt{mosarqp1} from \cite{MM}) we have $n=2500$, $k=700$,
and the condition number of $H + C^T(Z^{-1}\Lambda) C$  is $3.35 \times 10^4$.  

We report results for GMRES(20) with the inexact preconditioner $\tilde P_\alpha 
=M_\alpha (\alpha I_n + C^T(Z^{-1}\Lambda) C)$, where $M_\alpha = \tilde L \tilde L^T$ is the
no-fill incomplete Cholesky factorization of $H + \alpha I_n$, as well as for  CG preconditioned with $M_\alpha$ and
with the symmetrized preconditioner $\tilde P_\alpha^S := \tilde L (\alpha I_n + C^T(Z^{-1}\Lambda) C)\tilde L^T$.
In the tables, an entry  `$2000^*$' means that the stopping criterion was not met after 2000 iterations.

The results are shown in \Cref{T5,T6}. We can see that for fast convergence of the preconditioned
iterations, larger values of $\alpha$ must be used compared to the previous set of test problems,
especially with CG.
We also see that the performance of GMRES with the unsymmetric preconditioner $\tilde P_\alpha$
is generally better than the performance of CG with the symmetrized preconditioner $\tilde P_\alpha^S$. For
both problems, preconditioning only with $M_\alpha = \tilde L \tilde L^T \approx H+\alpha I_n$ is ineffective.
We mention that for these problems, diagonal scaling prior to computing the preconditioner
led to worse performance in some cases and was generally not beneficial.

\begin{table}[t!]
\footnotesize
\caption[]{ Iteration counts for \texttt{stcqp2}. 
Left: GMRES(20).  Right: CG 
} \label{T5}
\begin{center}
\begin{tabular}{|c|c|c|c|}
\hline
$\alpha$ &  $M_{\alpha}$ & $\tilde P_{\alpha}$ & no prec. \\
\hline
 1.0 & 873 & 159 & 1260  \\ 
 10.0 & 446 & 46 &   \\ 
 20.0 & 732 & 34 &   \\ 
 30.0 & 785 & 33 &   \\ 
 40.0 & 776 & 36 &   \\ 
 50.0 & 809 & 38 &   \\ 
 70.0 & 854 & 40 &   \\ 
 100.0 & 853 & 42 &  \\ 
\hline 
\end{tabular}
 \quad
\begin{tabular}{|c|c|c|c|}
\hline
$\alpha$ &  $M_{\alpha}$ & $\tilde P_{\alpha}^S$ & no prec. CG \\
\hline
 1.0   & 236 & 2000* &  278  \\ 
 20.0  & 229 & 485   &       \\ 
50.0  & 241 & 210   &       \\ 
 100.0 & 248 & 111   &       \\ 
 150.0 & 250 & 80    &       \\ 
 220.0 & 259 & 79    &       \\ 
 260.0 & 261 & 83    &       \\ 
 300.0 & 256 & 83    &       \\ 
\hline 
\end{tabular}
\end{center}
\end{table}

\begin{table}[t!]
\footnotesize
\caption[]{Iteration counts for \texttt{mosarqp1}. 
Left: GMRES(20).  Right: CG.
} \label{T6}
\begin{center}
\begin{tabular}{|c|c|c|c|}
\hline
$\alpha$ &  $M_{\alpha}$ & $\tilde P_{\alpha}$ & no prec. \\
\hline
 0.01   & 2000*  & 66  &        \\ 
 0.1    & 2000*  & 20  &        \\ 
 1.0    & 2000*  & 6   &        \\ 
 10.0   & 2000*  & 11  &        \\ 
 20.0   & 2000*  & 13  &        \\ 
 30.0   & 2000*  & 16  &         \\ 
\hline 
\end{tabular} \quad
\begin{tabular}{|c|c|c|c|}
\hline
$\alpha$ & $M_{\alpha}$ &  $\tilde P_{\alpha}^S$ & no prec. CG \\
\hline
 0.01  & 225 & 178  &    246   \\ 
 0.1   & 225 & 60   &       \\ 
 1.0   & 228 & 18   &       \\ 
 10.0  & 244 & 15   &       \\ 
 20.0  & 244 & 15   &       \\ 
 30.0  & 244 & 19   &       \\ 
\hline 
\end{tabular}
\end{center}
\end{table}

\subsection{Test results for sparse-dense least squares problems}
Finally, we present some results for three linear systems of the form (\ref{normal}) stemming 
from the solution of sparse-dense least squares problem. 


The first test problem, \texttt{scfxm1-2r} is  from \cite{SS}. Here $B_1$ is $65886\times 37980$, $B_2$ is $57\times 37980$ (so
$n=37980$, $k=57$),  $\kappa_2 (B_1^TB_1 + B_2^TB_2) = 9.32\times 10^6$. We note that $B_1$ is rank deficient,
hence $A = B_1^TB_1$ is singular. Diagonal scaling is applied here. 

The second problem is \texttt{neos}, again from \cite{SS}.  Here $m = 515905$, $n=479119$ and $k = 2708$.
No diagonal scaling was applied to this problem.

The third and largest test problem,  \texttt{stormg2-1000}, is taken from \cite{NR}. 
Here $B_1$ is $1377185\times 528185$, 
$B_2$ is $121\times 528185$ (hence we have $n=528185$, $k=121$).  No diagonal scaling is used on this matrix.


\begin{table}[h!]
\footnotesize
\caption[]{Iteration counts for \texttt{scfxm1-2r} problem. Left: GMRES(20). Right: CG.} \label{T7}
\begin{center}
\begin{tabular}{|c|c|c|c|}
\hline
$\alpha$ &  $M_{\alpha}$ & $\tilde P_{\alpha}$ & no prec. \\
\hline
 0.001  &  1572 &  555 &  240  \\ 
 0.01   &  693  &  91  &       \\ 
 0.1    &  183  &  36  &       \\ 
 0.5    &  154  &  39  &       \\ 
 1.0    &  155  &  50  &       \\ 
 10.0   &  213  &  141 &       \\ 
\hline 
\end{tabular} \quad
\begin{tabular}{|c|c|c|c|}
\hline
$\alpha$ &  $M_{\alpha}$  &  $\tilde P_{\alpha}^S$ & no prec. CG \\
\hline
 0.001 & 198  & 2000* &  180  \\ 
 0.01  & 171  & 1066  &       \\ 
 0.1   & 130  & 349   &       \\ 
 0.5   & 129  & 123   &       \\ 
 1.0   & 134  & 96    &       \\ 
 10.0  & 169  & 105   &       \\ 
\hline 
\end{tabular}
\end{center}
\end{table}

\begin{table}[h]
\footnotesize
\caption[]{Iteration counts for \texttt{neos} problem. Left: GMRES(20). Right: CG.
} \label{T8}
\begin{center}
\begin{tabular}{|c|c|c|c|}
\hline
$\alpha$ &  $M_{\alpha}$ & $\tilde P_{\alpha}$ & no prec. \\
\hline
 0.1     & 2000* &  280   &   1638     \\ 
 1.0     & 1641 &  53    &        \\ 
 5.0     & 1585 &  34    &        \\ 
 10.0    & 913  &  32    &        \\ 
 20.0    & 1404 &  34    &        \\ 
 30.0    & 1272 &  38    &        \\ 
\hline 
\end{tabular} \quad
\begin{tabular}{|c|c|c|c|}
\hline
$\alpha$ & $M_{\alpha}$ &  $\tilde P_{\alpha}^S$ & no prec. CG \\
\hline
 1.0    & 496 & 2000* &   325   \\ 
 5.0    & 409 & 974   &         \\ 
 10.0   & 371 & 466   &         \\ 
 100.0  & 342 & 83    &         \\ 
 120.0  & 338 & 81    &         \\ 
 150.0  & 342 & 87    &         \\ 
\hline 
\end{tabular}
\end{center}
\end{table}

\begin{table}[h!]
\footnotesize
\caption[]{Iteration counts for \texttt{stormg2-1000}.  Left: GMRES(20). Right: CG. 
} \label{T9}
\begin{center}
\begin{tabular}{|c|c|c|c|}
\hline
$\alpha$ &  $M_{\alpha}$ & $\tilde P_{\alpha}$ & no prec. \\
\hline
 0.001  &  2000*  &  2000*  &  2000*  \\ 
 0.01   &  2000*  &  334 &       \\ 
 0.1    &  2000*  &  98  &       \\ 
 0.5    & 2000*   &   43   & \\
1.0     &  2000*   &  50  &   \\    
 5.0    & 2000* &  89   &       \\ 
 10.0   &  2000*  &  118   &       \\ 
\hline 
\end{tabular} \quad
\begin{tabular}{|c|c|c|c|}
\hline
$\alpha$ & $M_{\alpha}$ & $\tilde P_{\alpha}^S$ & no prec. CG \\
\hline
 100.0  & 2000* & 2000* & 2000* \\
 110.0  & 2000* & 1906  &       \\ 
 600.0  & 2000* & 613   &       \\
 1200.0 & 2000* & 514   &       \\
 1600.0 & 2000* & 470   &       \\
 1800.0 & 2000* & 497   &       \\
 2000.0 & 2000* & 507   &       \\ 
\hline 
\end{tabular}
\end{center}
\end{table}


As in all previous tests, we do not form the coefficient matrix explicitly but we perform
sparse matrix-vector products with $B_1$, $B_2$ and their transposes. We present results for 
GMRES with the preconditioner $\tilde P_\alpha = M_\alpha (\alpha I_n + B_2^T B_2)$, where
$M_\alpha =\tilde L\tilde L^T$ is given by the no-fill incomplete Cholesky factorization of $B_1 ^TB_1 +\alpha I_n$, and for CG
with the symmetrized preconditioner $\tilde P_\alpha^S = \tilde L (\alpha I_n + B_2 ^TB_2)\tilde L^T$.
The inversion of $(\alpha I_n + B_2^T B_2)$ via the SMW formula is inexpensive, as it requires computing
a $k\times k$ dense Cholesky factorization with small $k$. Computing the incomplete Cholesky factor of $B_1^TB_1 + \alpha I_n$
is also very cheap.

The results are presented in \Cref{T7,T8,T9}. We see again that for appropriate values of $\alpha$ the convergence is
fast, especially for GMRES with the nonsymmetric version of the preconditioner. As in the case of the Schur complement
systems from constrained optimization, and unlike the case of incompressible flow problems, the optimal $\alpha$ is often
relatively large. One should keep in mind that these matrices have entries of very different magnitude from the ones encountered
in finite element problems, so the scaling is very different, as is the relative size of the two terms $A$ and $UU^T$.

\newpage


\begin{table}[t]
\footnotesize
\caption[]{GMRES(20) iterations and timings (secs.)~for \texttt{scfxm1-2r} problem.
 } \label{T10}
 \begin{center}
\begin{tabular}{|c|c|c|c|c|c|c|c|c|}
\hline
& &  & \multicolumn{2}{c|}{ $M_{\alpha}$ } & \multicolumn{2}{c|}{ $\tilde P_{\alpha}$ }& \multicolumn{2}{c|}{ $\tilde P_{\alpha}^S$ } \\
\hline
$\alpha$ & M-Time & P-Time  & Sol-Time & Its & Sol-Time & Its & Sol-Time & Its  \\
\hline
0.1 & 6.37e-03 & 6.47e-03 & 4.34e-01 & 183 & 8.78e-02 & 36  & 6.29e-01   & 349 \\ 
1.0 & 6.62e-03 & 6.68e-03 & 3.76e-01 & 155 & 1.28e-01 & 50  &  1.76e-01   & 96 \\ 
1.5 & 6.77e-03 & 6.83e-03 & 3.70e-01 & 155 & 1.50e-01 & 61  & 1.71e-01   & 96 \\ 
\hline 
\end{tabular}
\end{center}
\end{table}

\begin{table}[h!]
\footnotesize
\caption[]{GMRES(20) iterations and timings (secs.)~for  \texttt{stormg2-1000} 
problem. 
 } \label{T11}
 \begin{center}
\begin{tabular}{|c|c|c|c|c|c|c|c|c|}
\hline
& &  & \multicolumn{2}{c|}{ $M_{\alpha}$ } & \multicolumn{2}{c|}{ $\tilde P_{\alpha}$ } \\
\hline
$\alpha$ & M-Time & P-Time  & Sol-Time & Its & Sol-Time & Its  \\
\hline
0.5 & 7.05e-02 & 7.09e-02 & 1.01e+02 & 2000* & 2.28e+00 & 43    \\ 
1.0 & 6.90e-02 & 6.93e-02 & 1.01e+02 & 2000* & 2.61e+00 & 50    \\ 
1.5 & 6.93e-02 & 6.96e-02 & 1.01e+02 & 2000* & 3.10e+00 & 59   \\ 
\hline 
\end{tabular}
 \end{center}
\end{table}

In \Cref{T10,T11} we report some timings for the test problems  \texttt{scfxm1-2r} and \texttt{stormg2-1000}. 
We remark that the cost for constructing the preconditioner  $\tilde P_\alpha$
(or $\tilde P_\alpha^S$) is only slightly higher than for $M_\alpha$,
and is negligible compared to the cost of the iterative solution phase.

\section{Conclusions}\label{sec7}
In this paper we have proposed and investigated some approaches for solving large linear systems of the form $(A + \gamma UU^T) \, x = b$.
Such linear systems arise in several applications and can be challenging due to possible ill-conditioning and the fact that the coefficient
matrix $A_{\gamma} = A + \gamma UU^T$ often cannot be formed explicitly. We have proposed a preconditioning technique for use with GMRES,
together with a symmetric variant which can be used with the CG method when $A=A^T$. Some bounds on the eigenvalues of the preconditioned
matrices have been obtained.   Numerical experiments on a variety of test problems from different application areas indicate that the proposed approach is quite robust
 and can yield very fast convergence even when applied inexactly. In some cases we have been able to describe a heuristic for estimating the optimal
 value of the parameter $\alpha$ that appears in the preconditioner. 
 
 Future work should focus on obtaining better estimates of the preconditioned spectra and on heuristics for the choice of $\alpha$ for 
 general problems.  For PDE-related problems, estimates of the optimal  $\alpha$ could be obtained based on a Local Fourier Analysis, as done
 for other preconditioners (e.g., \cite{RDF}). Also, we plan to investigate the use of the preconditioner in the context of augmented Lagrangian preconditioning
 of incompressible flow problems, in order to determine how accurately one needs to solve the system (\ref{A-gamma}) at each appplication of
 the block triangular preconditioner (\ref{AL}) without adversely impacting the performance of FGMRES. 
 
 Finally,  for SPD problems the use of CG with the symmetrized variant of the preconditioner generally led to worse
 results (in terms of solution times)  than the use of restarted GMRES with the nonsymmetric preconditioner. Hence, the
 question of how to best symmetrize the preconditioner when $A$ is symmetric  remains open.

\section*{Acknowledgments}
The authors would like to thank Carlo Janna and two anonymous reviewers for helpful comments.

\bibliographystyle{siamplain}

\begin{thebibliography}{1}

\bibitem{AMD}
P.~R.~Amestoy, T.~A.~Davis, and I.~S.~Duff,
{\em An approximate minimum degree ordering algorithm},
SIAM J.~Matrix Anal.~Appl., 17 (1996), pp.~886--905.

\bibitem{Hdiv97}
D.~N.~Arnold, R.~S.~Falk, and R.~Winther,
{\em Preconditioning in $H(div)$ and applications},
Math.~Comp., 66 (1997), pp.~957--984.

\bibitem{Hdiv00}
D.~N.~Arnold, R.~S.~Falk, and R.~Winther,
{\em Multigrid in $H(div)$ and $H(curl)$},
Numer.~Math., 85 (2000), pp.~197--217.





\bibitem{HSS}
Z.-Z.~Bai, G.~H.~Golub, and M.~K.~Ng,
{\em Hermitian and skew-Hermitian splitting methods for non-Hermtian positive
definite linear systems},
SIAM J.~Matrix Anal.~Appl., 24 (2003),  pp.~603--626.

\bibitem{BB22}
F.~P.~A.~Beik and M.~Benzi,
{\em Preconditioning techniques for the coupled Stokes--Darcy problem: spectral and field-of-values analysis},
Numer.~Math., 150 (2022), pp.~257--298.                   

\bibitem{Benzi02}
M.~Benzi,
{\em Preconditioning techniques for large linear
systems: a survey},
J.~Comput.~Phys. 182 (2002), pp.~418--477.


\bibitem{RDF16}
M.~Benzi, S.~Deparis, G.~Grandperrin, and A.~Quarteroni,
{\em Parameter estimates for the relaxed dimensional factorization
preconditioner and application to hemodynamics},
Computer Meth.~Appl.~Mech.~Engrng., 
 300 (2016), pp.~129--145.

\bibitem{BG2004}
M.~Benzi and G.~H.~Golub,
{\em A preconditioner for generalized saddle point problems}, 
SIAM J.~Matrix Anal.~Appl., 26 (2004), pp.~20--41.

\bibitem{Benzi2005}
M. Benzi, G.~H.~Golub, and J.~Liesen,
{\em Numerical solution of saddle point problems},
Acta Numerica,
14 (2005), pp.~1--137.

\bibitem{RDF}
M.~Benzi, M.~K.~Ng, Q.~Niu, and Z.~Wang, 
{\em A relaxed dimensional factorization preconditioner
for the incompressible Navier--Stokes equations},
J.~Comput.~Phys., 230 (2011), 
pp.~6185--6202.

\bibitem{BO2006}
M.~Benzi and M.~A.~Olshanskii,
{\em An augmented Lagrangian-based approach to the Oseen problem},
SIAM J.~Sci.~Comput., 28 (2006),   pp.~2095--2113.

\bibitem{BS97}
M.~Benzi and D.~B.~Szyld,
{\em Existence and uniqueness of splittings for stationary iterative methods with applications 
to alternating methods},
Numer.~Math., 76 (1997), pp.~309--321.

\bibitem{BW13}
M.~Benzi and Z.~Wang,
{\em A parallel implementation of the modified augmented Lagrangian preconditioner 
for the incompressible Navier--Stokes equations},
Numer.~Algorithms, 64 (2013), pp.~73--84.

\bibitem{ADI}
G.~Birkhoff, R.~S.~Varga, and D.~Young,
{\em Alternating Direction Implicit Methods},
Advances in Computers, 3 (1962), pp.~189--273.


 \bibitem{BBF}
 D.~Boffi, F.~Brezzi and M.~Fortin,
 {\em Mixed Finite Element Methods and Applications},
 Springer Series in Computational Mathematics vol.~44, Springer, 2013. 
 
 \bibitem{Marin}
 J.~Cerd\'an, D.~Guerrero, J.~Mar\'in, and J.~Mas,
 {\em Preconditioners for nonsymmetric linear systems with low-rank skew-symmetric part},
 J.~Comput.~Appl.~Math., 343 (2018), pp.~318--327.
 
 \bibitem{SS}
 T.~A.~Davis,
 {\em SuiteSparse Matrix Collection}, available at www.sparse.tamu.edu/about.
 
 \bibitem{IFISS}
  H.~C.~Elman, A.~Ramage, and D.~J.~Silvester,
  {\em Algorithm 866: IFISS, a Matlab toolbox for modeling incompressible flow},
  ACM Trans.~Math.~Softw., 33 (2007), 14-es.
 
 \bibitem{ESW14}
 H.~C.~Elman, D.~J.~Silvester, and A.~J.~Wathen, 
 {\em Finite Elements and Fast Iterative Solvers with Applications in Incompressible
 Fluid Dynamics. Second Edition}, Oxford Science Publications, 2014.
 
 \bibitem{F19}
 P.~E.~Farrell, L.~Mitchell, and F.~Wechsung,
 {\em  An augmented Lagrangian preconditioner for the 3D stationary incompressible Navier-Stokes
 equations at high Reynolds numbers},
 SIAM J.~Sci.~Comput., 41 (2019), pp.~A3075--A3096.

 \bibitem{AL}
 M.~Fortin and R.~Glowinski,
 {\em Augmented Lagrangian Methods: Applications to the Numerical Solution
 of Boundary-Value Problems}, Studies in Mathematics and its Applications,
 North-Holland, Amsterdam/New York/Oxford, 1983.

\bibitem{Golub}
G.~H.~Golub and C.~Greif,
{\em On solving block-structured indefinite linear systems},
SIAM J. Sci. Comput.,
24 (2003), pp.~2076--2092.

\bibitem{GVL4}
G.~H.~Golub and C.~F.~Van Loan,
{\em Matrix Computations. 4th Edition},
Johns Hopkins University Press, Baltimore and London, 2013.


\bibitem{Hu}
L.~Hu, L.~Ma, and J.~Shen,
{\em Efficient spectral-Galerkin method and analysis for elliptic PDEs with non-local
boundary conditions}, 
J.~Sci.~Comput., 68 (2016), pp.~417--437.


\bibitem{Lu1}
Z.~Lu,
{\em Auxiliary iterative schemes for the discrete operators on De Rham complex},
arXiv:2105.02065v2, July 2021. 

\bibitem{Lu2}
Z.~Lu,
{\em Solving discrete constrained problems on De Rham complex}, 
arXiv:2107.06695v2, July 2021. 

\bibitem{Marchuk}
G.~I.~Marchuk,
{\em  Methods of Numerical Mathematics}, 
Springer-Verlag, New York, 1984.

\bibitem{MW2011}
K.-A.~Mardal and R.~Winther,
{\em Preconditioning discretizations of systems of partial differential equations},
Numer.~Linear Algebra Appl., 18 (2011), pp.~1--40.

\bibitem{MM}
I.~Maros and C.~M\'esz\'aros,
{\em A repository of convex quadratic programming problems},
Optim.~Methods Softw., 11-12 (1999), pp.~671--681.

\bibitem{NW06}
J.~Nocedal and S.~J.~Wright,
{\em Numerical Optimization, Second Edition},
Springer, New York, 2006.

\bibitem{Notay}
Y.~Notay,
{\em Flexible conjugate gradients},
SIAM J.~Scientific Comput., 22 (2000), pp.~1444--1460.

\bibitem{NR}
R.~A.~Rossi and N.~K.~Ahmed,
{\em Network Repository: A Scientific Network Data Repository with Interactive
Visualization and Mining Tools}, available at https://networkrepository.com/pubs.php.

\bibitem{FGMRES}
Y.~Saad,
{\em A flexible inner-outer preconditioned GMRES algorithm},
SIAM J.~Sci.~Comput., 14 (1993), pp.~461--469.

\bibitem{saad}
Y.~Saad,
{\em Iterative Methods for Sparse Linear Systems. Second Edition}, SIAM, Philadelphia, 2003.

\bibitem{ST18}
J.~Scott and M.~T\accent23 uma,
{\em A Schur complement approach to preconditioning sparse linear least-squares problems
with some sparse dense rows},
Numer.~Algorithms, 79 (2018), pp.~1147--1168.

\bibitem{ST19}
J.~Scott and M.~T\accent23 uma,
{\em Sparse stretching for solving sparse-dense linear least-squares problems}, 
SIAM J.~Sci.~Comput., 41 (2019), pp.~A1604-A1625.

\bibitem{ST22}
J.~Scott and M.~T\accent23 uma,
{\em A computational study of using black-box QR solvers for large-scale sparse-dense linear
least-squares problems},
ACM Trans.~Math.~Softw., 48 (2022), no.~1, Article 5.


\end{thebibliography}

\end{document}